\def\classification#1{\def\@class{#1}}
\DeclareFontFamily{OT1}{rsfs}{}
\DeclareFontShape{OT1}{rsfs}{n}{it}{<-> rsfs10}{}
\DeclareMathAlphabet{\mathscr}{OT1}{rsfs}{n}{it}
\DeclareMathOperator{\GL}{GL}
\DeclareMathOperator{\SL}{SL}
\DeclareMathOperator{\vdeg}{\overrightarrow{\text{deg}}}
\DeclareMathOperator{\Sp}{Sp}
\DeclareMathOperator{\SO}{SO}
\DeclareMathOperator{\Tr}{Tr}
\DeclareMathOperator{\tr}{tr}
\DeclareMathOperator{\cl}{cl}
\newcommand{\pol}{pol}
\DeclareMathOperator{\Cl}{Cl}
\newcommand{\Z}{\mathbb{Z}}
\newcommand{\Aff}{\mathbb{A}}
\newcommand{\R}{\mathbb{R}}
\newcommand{\Kbar}{\overline{K}}
\newcommand{\Fp}{\mathbb{F}_p}
\newtheorem{prop}{Proposition}[section]
\newtheorem*{main}{Main Theorem}
\newtheorem{cor}[prop]{Corollary}
\newtheorem{lem}[prop]{Lemma}
\newcommand{\Aint}{A_{t, \overrightarrow{j}}}
\numberwithin{equation}{section}
\title{Growth of small generating sets in $\SL_n(\mathbb{Z}/p\mathbb{Z})$}
\author{Nick Gill and Harald Andr\'es Helfgott}
\address{School of Mathematics, University of Bristol, Bristol, BS8 1TW, United Kingdom}
\begin{document}

\begin{abstract}
Let $G=\SL_n$ and fix $\delta$ a positive number. We prove that there are positive numbers $\epsilon$ and $C$ such that, for all fields $K=\Z/p\Z$ ($p$ prime), and all sets $A\subset G(K)$ that generate $G(K)$, either $|A|<p^{n+1-\delta}$, $\delta>0$ or
$|A\cdot A\cdot  A|\geq C|A|^{1+\epsilon}$.
\end{abstract}
\maketitle

In \cite{helfgott2} and \cite{helfgott3}, the second author proved
that every set of generators of $\SL_2(\mathbb{F}_p)$ and
$\SL_3(\mathbb{F}_p)$ grows rapidly. This has since found numerous
applications (\cite{BG},\cite{BGS},\cite{V}) and generalisations to groups
of the same type ($\SL_2$, $\SL_3$) over other fields (\cite{MCC}, \cite{Di}).
In the present paper, 
we prove that small subsets of $\SL_n(\mathbb{F}_p)$ grow.

\begin{main}
Let $G=\SL_n$ and fix $\delta$ a positive number. Then there are positive numbers $\epsilon$ and $C$ such that, for all fields $K=\Z/p\Z$ ($p$ prime), and all sets $A\subset G(K)$ that generate $G(K)$, either $|A|<p^{n+1-\delta}$, $\delta>0$ or
$|A\cdot A\cdot  A|\geq C|A|^{1+\epsilon}$.
\end{main}
Here $|S|$ denotes the number of elements of a finite set $S$.

Our basic approach is to assume that $A$ is a set of generators for $G(K)$ that
does not grow; i.e., for a fixed positive $\epsilon$, $|A\cdot A\cdot A| \ll_n
|A|^{1+\epsilon}$. We will then show that if $A$ is ``small''
($|A|<p^{n+1-\delta}$) we have a
contradiction with a result from arithmetic combinatorics (Prop. \ref{p:
  vital})
related to the sum-product theorem and to incidence theorems.

\section{Notation and background}

We collect here the background ideas that will be needed in the sequel.

\subsection{Arithmetic combinatorics}
 Our notation in this area is standard and, in particular, is identical to that of \cite{helfgott3}. In this section $G$ is an arbitrary group.

Given a positive integer $r$ and a subset $S$ of a group $G$, we define
$$A_r=\{g_1\cdot g_2\cdots g_r \mid g_i\in A\cup A^{-1}\cup \{1\}\}.$$

Given real numbers $a,b,x_1,\dots, x_n$, we write 
$$a\ll_{x_1,\dots, x_n} b$$
to mean that the absolute value of $a$ is at most the real number $b$
multiplied by a constant $c$ depending only on $x_1,\dots, x_n$. In this
situation we also write $a=O_{x_1,\dots, x_n}(b)$. When we omit $x_1, \dots,
x_n$, and write $a\ll b$, we mean that the constant $c$ is absolute, unless 
we explicitly state otherwise.

Occasionally we may write vectors in place of $x_1,\dots, x_n$; for instance $a\ll_{\overrightarrow{v}} b$ where $a$ and $b$ are real and $\overrightarrow{v} = (v_1,\dots, v_n)\in \R^n$. In this case we mean that $a$ is at most the real number $b$ multiplied by a constant $c$ depending only on $v_1,\dots, v_n,$ and  on $n$.

The next result will be useful in its own right; it also gives some more context to the statement of the Main Theorem.

\begin{lem}\label{l: tripling}\cite[Lem.\ 2.2]{helfgott3}
{\rm (Tripling Lemma).} Let $k>2$ be an integer and let $c$ and $\epsilon$ be positive numbers. Then there exist positive numbers $c'$ and $\epsilon'$ such that, if $A$ is a finite subset of a group $G$ satisfying $|A_k|\geq c|A|^{1+\epsilon}$, then
 $$|A\cdot A\cdot A|\geq c'|A|^{1+\epsilon'}.$$
\end{lem}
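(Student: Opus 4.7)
The plan is to prove the lemma by contrapositive, reducing it to a non-commutative version of the Plünnecke--Ruzsa inequality. Assume that $|A \cdot A \cdot A| < c'|A|^{1+\epsilon'}$ for constants $c', \epsilon' > 0$ yet to be chosen, and set $K := |A \cdot A \cdot A|/|A|$, so that $K < c'|A|^{\epsilon'}$. The goal is then to deduce $|A_k| < c|A|^{1+\epsilon}$, contradicting the hypothesis.

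The principal ingredient is the non-commutative Plünnecke--Ruzsa inequality: in any group, if $|A \cdot A \cdot A| \leq K|A|$, then for every product set $B_1 \cdot B_2 \cdots B_m$ with each $B_i \in \{A, A^{-1}\}$, one has $|B_1 \cdot B_2 \cdots B_m| \leq K^{f(m)} |A|$ for some function $f$ depending only on $m$. The standard proof proceeds by iterated applications of Ruzsa's triangle inequality
$$|X| \cdot |Y \cdot Z^{-1}| \leq |X \cdot Y^{-1}| \cdot |X \cdot Z^{-1}|,$$
combined with elementary bookkeeping that uses the tripling hypothesis to bound each of $|A \cdot A^{-1}|$, $|A^{-1} \cdot A|$, $|A \cdot A|$ by $K^{O(1)}|A|$, and then propagates this control to longer products.

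Summing over all products of factors from $\{A, A^{-1}, \{1\}\}$ of length at most $k$ (identity insertions correspond to shorter products) yields $|A_k| \leq 3^k \cdot K^{f(k)} |A| \leq 3^k (c')^{f(k)} |A|^{1 + f(k)\epsilon'}$. Choosing $\epsilon' := \epsilon/(2f(k))$ and then $c'$ small enough in terms of $c$, $k$ and $\epsilon$ forces the right-hand side strictly below $c|A|^{1+\epsilon}$, contradicting the assumed lower bound $|A_k| \geq c|A|^{1+\epsilon}$.

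The main obstacle is the non-commutativity. Plünnecke's classical argument for abelian groups, based on the magnification ratio in a bipartite graph, does not transfer directly, and, crucially, the doubling constant $|A \cdot A|/|A|$ is in general too weak to control longer products: there are non-abelian examples where $|A \cdot A|$ is small but $|A \cdot A \cdot A|$ is substantially larger. Replacing the doubling constant with the tripling constant removes this obstruction, and once Ruzsa's triangle inequality is available the remaining work amounts to careful bookkeeping over sign patterns and constants.
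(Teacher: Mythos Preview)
Your argument is correct and is the standard one: bound the tripling constant, then iterate Ruzsa's triangle inequality to control arbitrary signed products, exactly as in Helfgott's $\SL_3$ paper (and, earlier, Tao and Ruzsa). Note that the present paper does not actually prove this lemma; it simply quotes it as \cite[Lem.~2.2]{helfgott3}, so there is no separate ``paper's proof'' to compare against beyond the cited source, whose method coincides with yours.
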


We include one final piece of notation: For an $n$-tuple $(x_1,\dots, x_n)$, where $x_1,\dots, x_n$ are elements of some set, we write
$(x_1,\dots, \widehat{x_i}, \dots, x_n)$ to mean the $n-1$-tuple 
$$(x_1,\dots, x_{i-1},x_{i+1}, \dots, x_n).$$

\subsection{Degree and Escape}\label{s: escape}

We will use the notion of {\it escape from subvarieties} in a very similar way to in \cite{helfgott3}. A full explanation of this idea is given in \cite{helfgott3}; we note here only some essential definitions and results.

For $V$ an affine algebraic variety, we define $\dim V$ to be the dimension
of the irreducible subvariety of $V$ of largest dimension. As for the
degree, it will be best to see it as a vector: we define the {\em degree}
$\vdeg(V)$ of an arbitrary variety $V$ to be
\[(d_0,d_1,\dotsc,d_k),\]
where $k = \dim(V)$ and $d_j$ is the degree of the union of the irreducible components of $V$ of dimension $j$. 

If a regular map $\phi:V\mapsto W$ between two varieties 
$V\subset \mathbb{A}^m$, $W\subset \mathbb{A}^n$
is defined by polynomials $\phi_1, \phi_2,\dotsc , \phi_n$ on the variables
$x_1, x_2,\dotsc , x_m$, we define $\deg_{\pol}(\phi)$ to be 
$\max_j \deg(\phi_j)$.
(If several representations of $\phi$ by polynomials $\phi_1, \phi_2, \dotsc,
\phi_n$ are possible, we choose -- for the purposes of defining $\deg_{\pol}$ --
the one that gives us the least value of $\deg_{\pol}$.) 

Before proceeding we note an abuse of language: for a variety $V$ defined over a field $K$, and a subvariety $W$ defined over the algebraic completion $\Kbar$ of $K$, we will write $W(K)$ for $W(\Kbar)\cap V(K)$.

The  following series of results is related to the idea of escape and {\it non-singularity}.

 \begin{prop}\label{p: carbo}\cite[Prop. 4.1]{helfgott3}
Let $G$ be a group. Consider a linear representation of $G$
on a vector space $\mathbb{A}^n(K)$ over a field $K$.
Let $V$ be an affine subvariety of $\mathbb{A}^n$.

Let $A$ be a subset of $G$; let $\mathscr{O}$ be an $\langle A\rangle$-orbit
in $\mathbb{A}^n(K)$ not contained in $V$. Then there are constants $\eta>0$ and $m$
depending only on $\vdeg(V)$
such that,
for every $x\in \mathscr{O}$,
there are at least $\max(1, \eta |A|)$ elements $g\in A_m$ such that
$g x\notin V$.
\end{prop}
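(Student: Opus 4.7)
The plan is to separately establish (i) the existence of at least one escaper $g\in A_m$ for each $x\in\mathscr{O}$, which handles the case $\max(1,\eta|A|)=1$, and (ii) amplification from one escaper to $\eta|A|$ distinct escapers.

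For (i), I will consider the descending chain of subvarieties
\[
V_m = \{y\in V : gy\in V \text{ for all } g\in A_m\} = V\cap\bigcap_{g\in A_m} g^{-1}V.
\]
Since $G$ acts linearly on $\mathbb{A}^n$, each translate $g^{-1}V$ has the same $\vdeg$ as $V$, so $V_m$ is cut out by polynomials of degree bounded by some $D=D(\vdeg(V))$. Letting $W_m$ denote the space of polynomials of degree $\le D$ vanishing on $V_m$, we have $V_m=Z(W_m)$, and the $W_m$ form a nondecreasing chain of subspaces inside the finite-dimensional space $K[x_1,\dotsc,x_n]_{\le D}$; hence the chain $V_m$ stabilizes at some $m_0=O_{\vdeg(V)}(1)$. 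Once $V_{m_0}=V_{m_0+1}$, a direct check (for $y\in V_{m_0}$, $a\in A\cup A^{-1}$ and $g\in A_{m_0}$, the product $ga\in A_{m_0+1}$, so $g(ay)=(ga)y\in V$, so $ay\in V_{m_0}$) shows $V_{m_0}$ is $\langle A\rangle$-invariant. Since $\mathscr{O}$ is an orbit with $\mathscr{O}\not\subset V\supseteq V_{m_0}$, this forces $\mathscr{O}\cap V_{m_0}=\emptyset$, so every $x\in\mathscr{O}$ admits some $g\in A_{m_0}$ with $gx\notin V$.

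For (ii), I will fix a choice function $y\mapsto g_y\in A_{m_0}$ with $g_y y\notin V$ (and $g_y=\id$ when $y\notin V$). For each $a\in A$, the element $g_{ax}\cdot a\in A_{m_0+1}$ is an escaper for $x$, since $g_{ax}a\cdot x = g_{ax}(ax)\notin V$. Obtaining $\eta|A|$ distinct escapers reduces to bounding the fibre sizes of the map $a\mapsto g_{ax}a$: any coincidence $g_{a_1x}a_1=g_{a_2x}a_2$ applied to $x$ forces $g_{a_1x}(a_1x)=g_{a_2x}(a_2x)$, sharply restricting collisions. I then plan to partition $A$ according to whether $ax\notin V$ (in which case $a\in A_1$ is itself an escaper) or $ax\in V$ (handled via $g_{ax}$), and double-count over the image $\{ax:a\in A\}\subset\mathscr{O}$ to produce the bound $\eta|A|$ for suitable $\eta=\eta(\vdeg(V))$.

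The hardest step will be (ii): step (i) is essentially a Noetherian argument with explicit $\vdeg$-dependence via bounded-degree polynomials, whereas (ii) demands uniform-in-$A$ control of the fibres of the escaper map without letting constants depending on $|A|$ leak into $\eta$. Particular care will be needed if the stabilizer of $x$ in $\langle A\rangle$ intersects $A$ in many elements, which can inflate fibre sizes; reorganizing the double-count, or making the choice $y\mapsto g_y$ depend more cleverly on $y$, should resolve this.
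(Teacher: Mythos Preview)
The paper does not prove this proposition; it is quoted verbatim from \cite[Prop.~4.1]{helfgott3} and used as a black box. So there is no proof here to compare against, only the original argument in \cite{helfgott3} (going back to Eskin--Mozes--Oh).

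Your part (i) is essentially correct and matches the standard stabilisation argument: the chain $V_m$ is cut out by polynomials of degree $\le D=D(\vdeg V)$ because the action is linear, so the chain of degree-$\le D$ vanishing spaces $W_m$ stabilises in $O_{n,D}(1)$ steps, and the resulting $V_{m_0}$ is $\langle A\rangle$-invariant and hence disjoint from $\mathscr{O}$.

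Part (ii), however, has a genuine gap. The map $a\mapsto g_{ax}\,a$ need not have fibres bounded in terms of $\vdeg(V)$: a fibre over $h$ is parametrised by points $y\in\{ax:a\in A\}$ with $g_y y=hx$ and $g_y^{-1}h\in A$, and nothing prevents the ``escaper map'' $y\mapsto g_y y$ from collapsing many (even all) $y$'s to the single point $hx$. In that case the image of your map can have size $O(1)$ while $|A|$ is arbitrary, and no rearrangement of the double count over $\{ax:a\in A\}$ rescues this without an additional structural input. Your remark about stabilisers is a red herring: the injectivity on each fibre $\{a:ax=y\}$ is automatic, and the trouble is purely across distinct $y$'s.

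The argument in \cite{helfgott3} does not separate (i) and (ii). It proves both at once by induction on a complexity invariant of $V$ (e.g.\ the multiset of dimensions of irreducible components). Given $x\in\mathscr{O}$, one first finds $g_0\in A_{m'}$ with $y:=g_0x\notin V$. If at least $|A|/2$ elements $a\in A$ satisfy $ay\notin V$, one has $|A|/2$ escapers $ag_0\in A_{m'+1}$. Otherwise some $a_1\in A$ has $a_1y\in V$ while $y\notin V$, forcing $a_1V\ne V$; then $V':=V\cap a_1^{-1}V$ has strictly smaller complexity, $\vdeg(V')$ is bounded in terms of $\vdeg(V)$, and $y\in\mathscr{O}\setminus V'$. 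By induction there are $\ge\eta'|A|$ elements $h\in A_{m''}$ with $hy\notin V'$, i.e.\ $hy\notin V$ or $a_1hy\notin V$; one of the two alternatives yields $\ge\eta'|A|/2$ distinct escapers in $A_{m''+m'+1}$. The total number of halvings is bounded by the complexity of $V$, giving $\eta=\eta(\vdeg V)$. This inductive descent is the missing idea in your plan for (ii).
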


\begin{lem}\cite[Lem 4.3]{helfgott3}\label{l: ofor}%
Let $X\subseteq \Aff^{m_1}$ and $Y\subseteq\Aff^{m_2}$ be affine varieties defined over a field $K$. let $f:X\to Y$ be a regular map. Let $V$ be a subvariety of $X$ such that the derivative $Df|_{x=x_0}$ of $f$ at $x=x_0$ is a nonsingular linear map for all $x_0$ on $X$ outside $V$.

Then, for any $S\subseteq X(\Kbar)\backslash V(\Kbar)$, we have
$$|f(S)|\gg_{\overrightarrow{\deg}(X),\deg_{\rm pol}(f)} |S|.$$
\end{lem}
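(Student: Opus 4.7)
The plan is to prove that the fibers of $f$, restricted to $X\setminus V$, have size bounded by a constant depending only on $\vdeg(X)$ and $\deg_{\pol}(f)$; the lower bound on $|f(S)|$ then follows by a trivial pigeonhole. The key geometric observation is that the hypothesis---$Df|_{x_0}$ injective for every $x_0\in X(\Kbar)\setminus V(\Kbar)$---forces every such $x_0$ to be an isolated point of the fiber $f^{-1}(f(x_0))$: the Zariski tangent space to $f^{-1}(f(x_0))$ at $x_0$ is contained in $\ker Df_{x_0}=0$, so the local dimension of the fiber at $x_0$ vanishes.

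For any $y\in Y(\Kbar)$, the fiber $f^{-1}(y)$ is cut out of $X$ by the $m_2$ polynomial equations $f_j(x)=y_j$, each of degree at most $\deg_{\pol}(f)$. A Bezout-type bound for the isolated components of this intersection (analogous to the degree estimates used throughout \cite{helfgott3}) shows that $f^{-1}(y)$ has at most $C=C(\vdeg(X),\deg_{\pol}(f))$ isolated points, uniformly in $y$. Every $x\in S$ lies outside $V$ and is therefore, by the previous paragraph, an isolated point of its own fiber; hence $|f^{-1}(y)\cap S|\leq C$ for every $y$. Summing over $y\in f(S)$ gives $|S|\leq C|f(S)|$, as required.

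The main obstacle is establishing the uniform Bezout bound on the number of isolated points of a fiber: since fibers may acquire positive-dimensional components sitting entirely inside $V$, one cannot apply the classical Bezout theorem to $f^{-1}(y)$ directly. The standard workaround---an inductive argument intersecting with generic hyperplanes, or a refined Bezout theorem valid for non-equidimensional intersections---should deliver a bound of precisely the stated form in terms of $\vdeg(X)$ and $\deg_{\pol}(f)$, at which point the pigeonhole step above closes the argument.
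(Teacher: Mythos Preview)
The paper does not supply its own proof of this lemma: it is quoted verbatim from \cite[Lem.~4.3]{helfgott3} and used as a black box, so there is nothing in the present paper to compare your argument against.

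For what it is worth, your outline is the standard one and is essentially how the result is proved in the cited source. The two substantive steps are exactly those you identify: (i) injectivity of $Df|_{x_0}$ forces the Zariski tangent space of $f^{-1}(f(x_0))$ at $x_0$ to vanish, so $x_0$ is an isolated point of its fibre; (ii) a Bezout-type bound controls the number of isolated points of $f^{-1}(y)$ uniformly in $y$. Your caveat about positive-dimensional components of the fibre (living inside $V$) is well placed; the clean way around it is to note that $\dim X$ is encoded in $\vdeg(X)$, so one may replace the $m_2$ defining equations $f_j(x)=y_j$ by $\dim X$ generic linear combinations thereof, after which the refined Bezout inequality gives at most $\deg(X)\cdot(\deg_{\pol} f)^{\dim X}$ isolated solutions --- a bound of the required shape. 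With that step filled in, your pigeonhole finishes the proof.
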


\begin{lem}\cite[Lem.'s 4.4 and 4.6]{helfgott3} \label{l: lemfac}
Let $n$ be a positive integer, $(a_1,\dots, a_d)$ and $(b_1,\dots, b_e)$ be tuples of non-negative integers. There exist $c,d>0$ and a positive integer $k$ such that
\begin{itemize}
\item for all fields $K$ such that $|K|>c$;
\item for all reductive algebraic groups $G\subset \GL_n$ defined over $K$ of degree $(a_1,\dots, a_e)$;
\item for all subvarieties $V$ of $G$ of degree $(b_1,\dots, b_e)$;
\item for all $A\subset G(K)$ which generate $G(K)$;
\end{itemize}
we have
\[|A_k \cap (G(K)\setminus V(K))| \geq d|A|.\]
\end{lem}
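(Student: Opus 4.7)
The plan is to reduce the lemma to Proposition~\ref{p: carbo} via an escape-from-subvarieties argument. Embed $G \subset \GL_n \subset \mathbb{A}^{n^2}$ and consider the action of $G$ on $\mathbb{A}^{n^2}$ by left multiplication; since for each $h \in G(K)$ the map $g \mapsto hg$ is $K$-linear on $\mathbb{A}^{n^2}$, this constitutes a linear representation of $G$ in the sense of Proposition~\ref{p: carbo}. The $\langle A\rangle$-orbit $\mathscr{O}$ of the identity matrix $1_G$ under this representation is precisely $\langle A\rangle \cdot 1_G = G(K)$, using that $A$ generates $G(K)$. We assume $V \subsetneq G$, which is necessary for the conclusion to be nontrivial.

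The crucial intermediate step is to verify that $\mathscr{O}$ is not contained in $V$, i.e.\ that $G(K) \not\subseteq V(\Kbar)$, provided $|K|>c$ for a threshold $c$ depending on $n$ and $\vdeg(G)$. Since $V \subsetneq G$ and both are defined over $K$, we have $V(\Kbar) \subsetneq G(\Kbar)$; reductivity of $G$ together with a Lang--Weil-type point count then forces $|G(K)| > |V(\Kbar) \cap G(K)|$ once $|K|$ is large enough in terms of $\vdeg(G)$, so that $G(K)$ is Zariski dense in $G(\Kbar)$ and any proper subvariety misses at least one $K$-point.

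Applying Proposition~\ref{p: carbo} with $x = 1_G \in \mathscr{O}$ then yields a positive real $\eta$ and a positive integer $m$, both depending only on $\vdeg(V)$, $\vdeg(G)$, and $n$, such that at least $\max(1, \eta|A|)$ elements $g \in A_m$ satisfy $g \cdot 1_G = g \notin V$. Setting $k := m$ and $d := \min(1, \eta)$ gives
\[|A_k \cap (G(K) \setminus V(K))| \ge d|A|,\]
as required.

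The main technical obstacle is the quantitative Zariski-density step: one must make the threshold $c$ depend effectively on $\vdeg(G)$, which is standard (via Lang--Weil) but requires care. A secondary subtlety is that a reductive group need not be connected, so one has to verify that a set $A$ generating $G(K)$ automatically meets every coset of the identity component; this is again handled by enlarging $c$ so that $G(K)$ surjects onto the component group of $G$, at which point the orbit argument above goes through uniformly.
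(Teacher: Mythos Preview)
The paper does not give its own proof of this lemma: it is quoted verbatim from \cite[Lem.'s 4.4 and 4.6]{helfgott3} and immediately followed by the next cited statement. So there is no in-paper argument to compare against.

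That said, your approach is correct and is precisely the intended mechanism. Embedding $G\subset\GL_n\subset\mathbb{A}^{n^2}$, letting $G$ act on $\mathbb{A}^{n^2}$ by left multiplication, and taking $x=1_G$ reduces the statement directly to Proposition~\ref{p: carbo}, once one knows that the orbit $G(K)$ is not contained in $V(K)$. The two cited lemmas in \cite{helfgott3} play exactly the two roles you identify: one is the escape statement (your application of Proposition~\ref{p: carbo}), and the other supplies the input that $G(K)\not\subset V(K)$ for $|K|$ large in terms of $n$ and the degree data. Your appeal to a Lang--Weil-type count (for finite $K$) together with unirationality/Zariski-density of $G(K)$ for reductive $G$ (for infinite $K$) is the standard way to obtain that input, and the dependence of the threshold $c$ on $\vdeg(G)$ and $n$ is unavoidable and expected.

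The caveats you flag at the end are real but minor. The non-connected case is indeed handled by enlarging $c$: over a finite field, Lang's theorem makes every $G^0$-torsor trivial, so each geometric component acquires $K$-points once $|K|$ is large; over an infinite field the density argument already suffices. None of this affects the shape of the argument, and nothing further is needed for the uses of Lemma~\ref{l: lemfac} in this paper.
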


\begin{cor}\cite[Cor. 4.5]{helfgott3}\label{c: gotrol}
Let $G\subset \GL_n$ be an algebraic group of rank $r$ and $Y\subset \mathbb{A}^{m}$ 
an affine variety, both defined over a field $K$. Let $f:G\to Y$ be a regular map.
Let $V$ be a proper subvariety of $G$. Assume that 
the derivative $D f|_x$ of $f$ at $x$
is a nonsingular linear map for all $x$ on $G$ outside $V$.

Let $A\subset G(K)$ be a set
of generators of $G(K)$. Then
\[|f(A_k \cap (G(K)\backslash V(K)))|\gg_{\overrightarrow{\deg}(G), \overrightarrow{\deg}(V), \deg_{\rm pol}(f)} |A|,\]
where $k\ll_{\vdeg(V)} 1$.
\end{cor}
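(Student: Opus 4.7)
The plan is to deduce Corollary \ref{c: gotrol} by directly combining the two preceding results: Lemma \ref{l: lemfac} supplies enough elements of $A_k$ that avoid $V(K)$, and Lemma \ref{l: ofor} guarantees that $f$ is injective-up-to-a-constant on such a set.

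First, apply Lemma \ref{l: lemfac} to the algebraic group $G$ and the proper subvariety $V$. This produces a positive integer $k$ and a constant $d>0$, both depending only on $\vdeg(G)$ and $\vdeg(V)$, such that
\[
|A_k \cap (G(K)\setminus V(K))| \geq d|A|,
\]
provided $|K|$ is large enough (which may be absorbed into the $\gg$ constant, or handled trivially for small $K$). Set $S := A_k \cap (G(K)\setminus V(K))$; this is a subset of $G(\Kbar)\setminus V(\Kbar)$ of size at least $d|A|$.

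Next, apply Lemma \ref{l: ofor} with $X = G$, $Y$ as given, and the same $V$. The hypothesis that $Df|_x$ is nonsingular on $G\setminus V$ is exactly the one needed; the conclusion is
\[
|f(S)| \gg_{\vdeg(G),\,\deg_{\pol}(f)} |S|.
\]
Combining the two bounds yields
\[
|f(A_k \cap (G(K)\setminus V(K)))| \gg_{\vdeg(G),\,\vdeg(V),\,\deg_{\pol}(f)} |A|,
\]
with $k \ll_{\vdeg(V)} 1$ as required.

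There is essentially no obstacle here: the corollary is a bookkeeping combination of the two lemmas above, with the only mild point being to track that the implicit constants and the value of $k$ depend only on $\vdeg(G)$, $\vdeg(V)$, and $\deg_{\pol}(f)$, and not on $A$ or on $K$. The rank $r$ of $G$ plays no explicit role beyond being part of the data that determines $\vdeg(G)$. Thus the proof is a two-line invocation, and the real work has already been carried out in establishing escape (Lemma \ref{l: lemfac}) and the nonsingularity bound (Lemma \ref{l: ofor}).
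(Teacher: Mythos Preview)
Your argument is correct and is exactly the intended deduction: apply Lemma~\ref{l: lemfac} to produce $\gg |A|$ elements of $A_k$ avoiding $V(K)$, then apply Lemma~\ref{l: ofor} to push these forward by $f$ with only bounded loss. The paper does not reproduce a proof of this corollary (it is quoted from \cite[Cor.~4.5]{helfgott3}), but the proof there is precisely this two-step combination, so your proposal matches.
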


\begin{lem}\cite[Lem.\ 4.7]{helfgott3}\label{l: bull}
Let $G\subseteq GL_n$ be an algebraic group defined over a field $K$. Let $X/K$ and $Y$ be a affine varieties such that $\dim(Y)=\dim(G)$. Let $f:X\times G\to Y$ be a regular map. Define
$$f_x:G\to Y, \, g\mapsto f(x,g).$$
Then there is a subvariety $Z_{T\times G}\subset T\times G$ such that, for all $(x,g_0)\in (X\times G)(\Kbar)$, the derivative
$$(Df_x)|_{g=g_0}: (TG)|_{g=g_0} \to (TY)|_{f(x,g_0)}$$
is non-singular if and only if $(x,g_0)$ does not lie on $Z_{T\times G}$. Moreover,
$$\overrightarrow{\deg}(Z_{T\times G})\ll_{\overrightarrow{\deg}(T\times G), \deg_{\rm pol}(f), n} 1.$$
\end{lem}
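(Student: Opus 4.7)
The idea is to exhibit $Z_{X\times G}$ as a determinantal subvariety encoding the failure of a combined Jacobian matrix to have maximal rank, and then to read off the degree bound from the degrees of the polynomial entries. (I am reading the subscript $T\times G$ in the statement as $X\times G$.)

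Embed $G \subset \GL_n \subset \Aff^{n^2}$ and $Y \subset \Aff^m$, and fix defining polynomials $h_1,\dotsc,h_s$ for $G$ inside $\Aff^{n^2}$, whose degrees are bounded in terms of $\vdeg(G)$ alone. For each $x$, the regular map $f_x$ extends to a polynomial map $\tilde f_x:\Aff^{n^2}\to\Aff^m$, and $D\tilde f_x|_{g_0}$ is an $m\times n^2$ Jacobian whose restriction to $T_{g_0}G = \ker (Dh)|_{g_0}$ automatically takes values in $T_{f(x,g_0)}Y\subset\Aff^m$. Since $\dim Y=\dim G$, this restriction is precisely $Df_x|_{g_0}$, and it is non-singular iff it is injective, iff
$$\ker\bigl(D\tilde f_x|_{g_0}\bigr)\cap\ker\bigl((Dh)|_{g_0}\bigr)=\{0\}\ \text{in}\ \Aff^{n^2}.$$

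Stacking the two Jacobians, form the $(m+s)\times n^2$ matrix
$$M(x,g_0)=\begin{pmatrix} D\tilde f_x|_{g_0}\\ (Dh)|_{g_0}\end{pmatrix},$$
whose entries are polynomials in the ambient coordinates of $(x,g_0)$. The kernel intersection above is trivial iff $M(x,g_0)$ has rank $n^2$, i.e.\ iff some $n^2\times n^2$ minor of $M$ is non-zero. Take $Z_{X\times G}$ to be the intersection of $X\times G$ with the common vanishing locus of all $n^2\times n^2$ minors of $M$; this is a Zariski-closed subvariety of $X\times G$ whose complement is exactly the non-singular locus, as required.

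For the degree estimate, each entry of $D\tilde f_x$ has degree at most $\deg_{\rm pol}(f)$ in $(x,g_0)$, and each entry of $Dh$ has degree bounded in terms of $\vdeg(G)$, so every $n^2\times n^2$ minor of $M$ is a polynomial of degree $O_{\vdeg(G),\deg_{\rm pol}(f),n}(1)$. A Bezout-type bound for the $\vdeg$ of the intersection of $X\times G$ with the finitely many hypersurfaces cut out by these minors then gives
$$\vdeg(Z_{X\times G})\ll_{\vdeg(X\times G),\deg_{\rm pol}(f),n}1.$$
The main technical nuisance is exactly this last step: one needs a version of Bezout strong enough to control \emph{every} entry of the vector $\vdeg$, not just the top-dimensional contribution, for an intersection that is a priori neither irreducible nor pure-dimensional. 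Such bounds are standard (as used elsewhere in \cite{helfgott3}) but fiddly; the conceptual content of the lemma is simply the determinantal description of the singular locus of $Df_x$ and the fact that determinants of bounded-degree polynomial matrices again have bounded degree.
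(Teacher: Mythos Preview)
The paper does not actually prove this lemma: it is quoted verbatim as \cite[Lem.\ 4.7]{helfgott3} and used as a black box, so there is no proof in the paper to compare against. Your reading of the subscript as $X\times G$ rather than $T\times G$ is correct; this is a typo in the statement as reproduced here.

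On the merits, your determinantal description is the standard way to establish such a result and is essentially what lies behind the cited lemma. The key observation---that non-singularity of $Df_x|_{g_0}$ on $T_{g_0}G$ is equivalent to the stacked Jacobian $M(x,g_0)$ having full column rank $n^2$---is correct, and the resulting minor conditions cut out a closed subvariety with the right complement. Your caveat about the degree bound is well placed: controlling the full vector $\vdeg$ (rather than just the sum or the top-dimensional piece) under intersection with several hypersurfaces is exactly the kind of bookkeeping handled in \cite[\S 2.5]{helfgott3}, and you are right that this is where the actual work hides. One small point worth tightening: you assert that the restriction of $D\tilde f_x|_{g_0}$ to $T_{g_0}G$ ``automatically'' lands in $T_{f(x,g_0)}Y$. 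This is true at smooth points of $Y$, but if $f(x,g_0)$ is a singular point of $Y$ the Zariski tangent space may be larger than $\dim Y$, and then injectivity of the restriction is still the right condition for ``non-singular'' in the sense intended (namely, that $Df_x$ be an isomorphism onto a $\dim G$-dimensional subspace). Your rank-$n^2$ criterion on $M$ captures exactly this injectivity, so the argument goes through regardless; it is only the phrase ``non-singular iff injective'' via equal dimensions that needs this gloss.
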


\section{Known bounds for sets that do not grow.}\label{s: known}

We set $G=SL_n$, $K=\Z/p\Z$; we use a number of results about generating sets in $G(K)$ that ``do not grow'' \cite{helfgott3}. We start with information about how such a set intersects with a torus of $G$.

\begin{prop}\label{p: known1}\cite[Cor.'s 5.4 and 5.10]{helfgott3}
Let $G=SL_n$. 
\begin{enumerate}
\item There exists $C>0$ and $k$ a positive integer such that, for all finite fields $K$ of characteristic $\neq 2$, for any $T$ a maximal torus of $G$ defined over $\Kbar$, and for any $A\subset G(K)$ a set of generators of $G(K)$, we have
$|A\cap T(K)|\leq C|A_k|^{\frac1{n+1}}$.

\item There exist $d', \delta>0$ and $k'$ a positive integer such that, for all finite fields $K$ of characteristic $\neq 2$, and for any $A\subset G(K)$ a set of generators of $G(K)$ satisfying $|A\cdot A\cdot A|\leq d|A|^{1+\epsilon}$ for some $d,\epsilon>0$, there is a maximal torus $T$ such that $|A_{k'}\cap T(K)| \geq (d'd^\delta)|A|^{\frac1{n+1}-\delta\epsilon}$.
\end{enumerate}
\end{prop}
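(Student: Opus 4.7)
The two parts require opposite strategies, but both rely on the escape machinery from Section \ref{s: escape}.

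For part (a), the heuristic is $|A\cap T(K)|\approx|A|^{\dim T/\dim G}=|A|^{1/(n+1)}$, reflecting the identity $(n+1)\dim T=(n+1)(n-1)=n^2-1=\dim G$. To realize this, I would construct the conjugation--product map
\[
\Phi_{g_1,\dotsc,g_n}(t_0,t_1,\dotsc,t_n) \;:=\; t_0\cdot g_1 t_1 g_1^{-1}\cdot g_2 t_2 g_2^{-1}\cdots g_n t_n g_n^{-1}\colon T^{n+1}\to G.
\]
For generic $(g_1,\dotsc,g_n)\in G^n$, this map is dominant and has nonsingular derivative off a proper subvariety of $T^{n+1}$; moreover, the locus of \emph{bad} choices of $(g_1,\dotsc,g_n)$ is a proper subvariety $Z\subset G^n$ whose degree is bounded in terms of $n$ alone, by Lemma \ref{l: bull} (applied to $f\colon G^n\times T^{n+1}\to G$, $(\vec g,\vec t)\mapsto\Phi_{\vec g}(\vec t)$, followed by projection to $G^n$). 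Since $A$ generates $G(K)$, Proposition \ref{p: carbo} produces $(g_1,\dotsc,g_n)\in(A_m)^n\setminus Z(K)$ for some $m=O_n(1)$. With this choice fixed, Lemma \ref{l: ofor} applied to $\Phi$ on $(A\cap T(K))^{n+1}$ gives $|\Phi((A\cap T(K))^{n+1})|\gg_n|A\cap T(K)|^{n+1}$. Since the image lies in $A_k$ for some $k=O_n(1)$, one concludes $|A\cap T(K)|^{n+1}\ll_n|A_k|$, which is (a).

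Part (b) is substantially more delicate, since we must \emph{produce} a torus with large intersection rather than bound an arbitrary one. First, Lemma \ref{l: lemfac} (applied with $V$ the non-regular-semisimple locus) ensures that a positive proportion of $A_{k_1}$ is regular semisimple for some $k_1=O_n(1)$. For each such $a$, the centralizer $C_G(a)$ is the unique maximal torus $T_a$ containing $a$, so a commuting pair $(a,b)\in A_{k_1}^2$ with $a$ regular semisimple satisfies $b\in T_a$; regrouping by torus,
\[
\#\bigl\{(a,b)\in A_{k_1}^2 : ab=ba,\ a\text{ regular semisimple}\bigr\}\;\leq\;\sum_T|A_{k_1}\cap T(K)|^2.
\]
The plan is to bound the left-hand side from \emph{below} via non-growth: the conjugation map $(g,a)\mapsto gag^{-1}$ sends $A\times A$ into $A_3$, and $|A_3|\leq d|A|^{1+\epsilon}$ forces many collisions; combined with escape-style regularization (discarding non-generic pairs), a collision $gag^{-1}=g'ag'^{-1}$ with $a$ regular semisimple forces $g^{-1}g'\in C_G(a)=T_a$, which supplies a commuting pair. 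Quantifying yields $\gg|A|^{1+2/(n+1)-O(\delta\epsilon)}$ commuting pairs. Each contributing torus contains at least one regular semisimple element of $A_{k_1}$, so at most $|A_{k_1}|\ll|A|^{1+O(\epsilon)}$ tori contribute; pigeonholing in the display then yields a torus $T$ with $|A_{k'}\cap T(K)|^2\gg|A|^{2/(n+1)-O(\delta\epsilon)}$, whence $|A_{k'}\cap T(K)|\gg|A|^{1/(n+1)-\delta\epsilon}$ after absorbing constants into $\delta$.

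The chief obstacle is producing the commuting-pair count with exponent $1+2/(n+1)$. The naive collision argument---that conjugation maps $A\times A$ into $A_3$ and so has large-fiber structure---only gives $\sim|A|^{2-O(\epsilon)}/|\text{conjugacy classes}|$ \emph{collision} pairs, from which too few truly \emph{commuting} pairs can be extracted. Recovering the extra factor $|A|^{2/(n+1)}$ requires invoking the dimensional identity $(n+1)\dim T=\dim G$ a second time, inside an auxiliary generically-finite polynomial map of the same flavor as $\Phi$ in (a) but now encoding a collision count rather than an image count. Part (a) is, by contrast, a clean dimension-counting application of the escape/nonsingularity framework. The $-\delta\epsilon$ loss in the exponent of (b) reflects the unavoidable slack in the Tripling Lemma and Ruzsa-type estimates that propagate the non-growth hypothesis from $A\cdot A\cdot A$ to the higher product sets $A_{k_1},A_{k'}$ appearing in the argument.
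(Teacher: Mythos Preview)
This proposition carries no proof in the present paper; it is quoted verbatim from \cite{helfgott3} (Corollaries 5.4 and 5.10 there), so there is no in-paper argument to compare against. I can only assess your proposal against the argument in \cite{helfgott3}.

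Your plan for (a) is correct and is exactly the argument of \cite[Cor.~5.4]{helfgott3}: the product-of-conjugates map $\Phi\colon T^{n+1}\to G$ is dominant and generically finite for a generic tuple of conjugators, escape (Lem.~\ref{l: lemfac}) places such conjugators inside $A_m$, and Lem.~\ref{l: ofor} then gives $|A\cap T(K)|^{n+1}\ll_n|A_k|$.

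Your plan for (b) has a real gap, which you yourself identify: you need $\gg|A|^{1+2/(n+1)-O(\epsilon)}$ commuting pairs, and the collision heuristic does not deliver this exponent. Collisions $gag^{-1}=g'a'g'^{-1}$ give commuting pairs only when $a=a'$; restricting to a fixed $a$ produces at most $|A|^2/|A_{k+2}\cap C_a|$ elements in $T_a$, and summing over $a$ gives no usable lower bound on $\sum_T|A_{k_1}\cap T|^2$ without already controlling the distribution of $|A_{k+2}\cap C|$ over classes $C$. The argument in \cite{helfgott3} avoids the second-moment route entirely. One first proves $|\cl((A_k)')|\gg|A|^{1/(n+1)}$ (Cor.~5.7 there, quoted here as Prop.~\ref{p: known2}(a)); since $\sum_C|A_{k+2}\cap C|\leq|A_{k+2}|\ll|A|^{1+O(\epsilon)}$, pigeonhole yields a single class $C$ meeting $(A_k)'$ with $|A_{k+2}\cap C|\ll|A|^{n/(n+1)+O(\epsilon)}$. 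For any $a\in C\cap(A_k)'$ the orbit--stabiliser inequality
\[
|A|\;\leq\;|\{gag^{-1}:g\in A\}|\cdot|A^{-1}A\cap T_a|\;\leq\;|A_{k+2}\cap C|\cdot|A_2\cap T_a|
\]
then gives $|A_2\cap T_a|\gg|A|^{1/(n+1)-O(\epsilon)}$ directly. The ``second invocation of $(n+1)\dim T=\dim G$'' you gesture at is exactly the input Prop.~\ref{p: known2}(a), but it enters through a first-moment pigeonhole on conjugacy classes rather than a second-moment count over tori; this is what makes the argument close.
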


A torus $T$ in $G$ that satisfies the last inequality will be called {\it rich}; that is to say, such a $T$ has a large intersection with the set $A_k$.

For a set $B\subset G(K)$ define $\cl(B)$ to be the set of conjugacy classes of $G(K)$ that intersect $B$; if $B$ contains only one element, $g$, then write $\cl(g)$ for $\cl(B)$. We also write $B'$ for the set of semisimple elements in $B$; there are well-known bounds on $|\cl(A')|$ where $A$ is a generating set in $G(K)$ that does not grow.

\begin{prop}\label{p: known2}\cite[Cor.'s 5.7 and 5.11]{helfgott3}
Let $G=SL_n$.
\begin{enumerate}
\item There exists $C>0$ and a positive integer $k$ such that, for all finite fields $K$ of characteristic $\neq 2$, for any $T$ a maximal torus of $G$ defined over $\Kbar$, and for any $A\subset G(K)$ a set of generators of $G(K)$, we have
$|\cl((A_k)')|\geq C|A|^{\frac1{n+1}}$.

\item There exist $d', \delta>0$ and a positive integer $k'$ such that, for all finite fields $K$ of characteristic $\neq 2$, and for any $A\subset G(K)$ a set of generators of $G(K)$ satisfying $|A\cdot A\cdot A|\leq d|A|^{1+\epsilon}$ for some $d,\epsilon>0$, we have $|\cl(A')| \leq (d'd^\delta)|A_{k'}|^{\frac1{n+1}+\delta\epsilon}$.
\end{enumerate}
\end{prop}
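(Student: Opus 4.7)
The plan is to derive both parts from Proposition~\ref{p: known1} together with escape from subvarieties, using that for $G=\SL_n$ every regular semisimple conjugacy class meets any given maximal torus in exactly $|W|=n!$ points, so the characteristic polynomial determines a regular semisimple conjugacy class up to a bounded $|W|$-fold ambiguity.

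For part (a), I would first invoke Proposition~\ref{p: carbo} with the discriminant hypersurface $V\subset G$ defined by the vanishing of the discriminant of the characteristic polynomial---a proper subvariety of $G$ of degree depending only on $n$---to obtain that a positive proportion of $A_k$ consists of regular semisimple elements, i.e.\ $|(A_k)'\cap G_{\mathrm{reg}}|\gg |A|$ for some $k=k(n)$. A counting argument then passes from the upper bound of Proposition~\ref{p: known1}(a) to the desired lower bound on $|\cl((A_k)')|$: since each regular semisimple class contributes at most $|W|$ elements to any single torus, distributing $\gg |A|$ regular semisimple elements of $A_k$ over maximal tori, each containing at most $\ll |A_{k'}|^{1/(n+1)}$ of them, forces them to realise at least $\gg |A|^{1/(n+1)}$ distinct conjugacy classes after summing over the $O_n(1)$ many $G(K)$-conjugacy classes of maximal tori.

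For part (b), under the no-growth hypothesis $|A\cdot A\cdot A|\leq d|A|^{1+\epsilon}$ and the tripling lemma (Lemma~\ref{l: tripling}), each $A_m$ likewise fails to grow significantly. I would argue by contradiction: if $|\cl(A')|>(d'd^\delta)|A_{k'}|^{1/(n+1)+\delta\epsilon}$, then extracting a rich torus $T$ via Proposition~\ref{p: known1}(b), so that $|A_{k'}\cap T(K)|\gg |A|^{1/(n+1)-\delta\epsilon}$, a pigeonhole over characteristic polynomials produces too many distinct semisimple classes that, after conjugating their representatives into $T$ with bounded $A_{m}$-cost, swell $|A_{k''}\cap T(K)|$ beyond what Proposition~\ref{p: known1}(a) allows.

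The main obstacle will be the book-keeping of exponents in part (b): since $1/(n+1)=\dim T/\dim G$ is tight for $\SL_n$, one must propagate the $\delta\epsilon$ correction carefully through the tripling lemma and the rich-torus construction, so that only multiplicative (rather than polynomial-in-$n$) losses infect the exponent.
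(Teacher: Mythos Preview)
This proposition is not proved in the present paper: it is quoted directly from \cite{helfgott3} (Corollaries~5.7 and~5.11 there) and used as a black box, so there is no proof here to compare against. Nonetheless, your sketch has real gaps in both parts.

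In part~(a), the passage from ``many tori'' to ``many conjugacy classes'' fails. Distributing the $\gg|A|$ regular semisimple elements of $A_k$ over their centraliser tori, with each torus holding $\ll|A_{k'}|^{1/(n+1)}$ of them, shows only that $\gg|A|^{n/(n+1)}$ distinct tori occur as centralisers---but a \emph{single} regular semisimple class already meets $|G|/(|T|\cdot|W|)$ distinct tori, so this gives no lower bound on the number of classes. The argument in \cite{helfgott3} is of a different nature: one builds a map $g\mapsto(\kappa(h_0g),\dotsc,\kappa(h_ng))$ from $G$ to $\mathbb{A}^{(n+1)(n-1)}=\mathbb{A}^{\dim G}$, checks it is generically non-singular for suitable $h_i\in A_m$, and applies Corollary~\ref{c: gotrol} to get $\gg|A|$ distinct image tuples; since each coordinate lies in $\kappa((A_{m+1})')$, one obtains $|\cl((A_{m+1})')|^{n+1}\gg|A|$. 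This is precisely the template that the present paper refines in Lemma~\ref{l: dosh} and Proposition~\ref{p: conj tuple}.

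In part~(b), the step ``conjugating their representatives into $T$ with bounded $A_m$-cost'' is unjustified: given $a\in A$ regular semisimple with centraliser $T_a\neq T$, there is no reason the element conjugating $a$ into $T$ should lie in any $A_m$, so you cannot manufacture elements of $A_{k''}\cap T(K)$ this way. The rich torus is a red herring here. The argument in \cite{helfgott3} runs in the opposite direction and uses Proposition~\ref{p: known1}(a) rather than~(b): for each class $C$ meeting $A'$ fix $a_C\in A\cap C$; then $\{ga_Cg^{-1}:g\in A\}\subset A_3\cap C$ has at least $|A|/|A^{-1}A\cap C_G(a_C)|\gg|A|/|A_{k}|^{1/(n+1)}$ elements by Proposition~\ref{p: known1}(a) applied to the torus $C_G(a_C)$. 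Summing these disjoint contributions over $C\in\cl(A')$ bounds $|A_3|$ from below by $|\cl(A')|\cdot|A|^{1-1/(n+1)-O(\epsilon)}$, and the non-growth hypothesis then yields the stated upper bound on $|\cl(A')|$.
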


Note that the Tripling Lemma (Lem.\ \ref{l: tripling}) implies that similar bounds apply to any set $A_l$ where $l\ll_n 1$; for instance, if $|A\cdot A \cdot A|\ll |A_l|^{1+\epsilon}$ and $l\ll_n1$, then $|A_l\cdot A_l \cdot A_l|\ll |A_l|^{1+O_l(\epsilon)}$, and so 
$$|\cl(A_l')| \ll_{l,n} |A_{kl}|^{\frac1{n+1}+O_{l,n}(\epsilon)}\ll_{l,n} |A|^{\frac1{n+1}+O_{l,n}(\epsilon)}.$$

We need to be sure that the intersection of $A_{k_\dagger}$ with a rich torus $T$ does not lie inside a subtorus (of a certain kind).

\begin{prop}\label{p: subtorus}\cite[Cor. 5.14]{helfgott3}
Let $G=SL_n$ and let $(a_1,\dots, a_r)$ be a tuple of non-negative integers. Then there exists $C>0$ and a positive integer $k$ such that for a finite field $K$ with characteristic $>n$; for $T$ a maximal torus of $G$ defined over $\Kbar$; for $\alpha:T\to\Aff^1$ a character of $T$ with kernel $T'$ satisfying $\deg(T')=(a_1,\dots, a_r)$; and for $A\subset G(K)$ a set of generators of $G(K)$, we have
$$|A\cap T'(K)|\leq C|A_k|^{\frac1{n+2}}.$$
\end{prop}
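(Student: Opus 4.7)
The plan is to adapt the argument behind Proposition \ref{p: known1}(1). There, the bound $|A\cap T(K)|\ll |A_k|^{1/(n+1)}$ for a full maximal torus $T$ comes from a dominant map from $T^{n+1}$ into $G$, with the exponent $n+1$ arising from the identity $(n+1)\dim T=n^2-1=\dim G$. Since $\dim T'=n-2$, using $n+1$ copies of $T'$ would undershoot $\dim G$; I would instead use $n+2$ copies of $T'$ together with a generically-injective (rather than dominant) map into $G$, exploiting $(n+2)(n-2)=n^2-4\le n^2-1=\dim G$.

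Concretely, let $M=|A\cap T'(K)|$ and consider the regular map
\[\phi\colon (T')^{n+2}\times G^{n+2}\longrightarrow G,\qquad (t_1,\dotsc,t_{n+2},h_1,\dotsc,h_{n+2})\mapsto\prod_{i=1}^{n+2}h_i t_i h_i^{-1},\]
writing $\phi_{\vec{h}}\colon(T')^{n+2}\to G$ for the induced map with $\vec{h}$ fixed. A direct Lie-algebra calculation identifies the rank of $d\phi_{\vec{h}}$ with the dimension of $\sum_i \Ad(h_i)\liet'\subseteq\lieg$, where $\liet'$ is the $(n-2)$-dimensional Lie algebra of $T'$. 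The first key assertion is that for generic $\vec{h}\in G^{n+2}$ these $n+2$ copies of $\Ad(h_i)\liet'$ are linearly independent, so $d\phi_{\vec{h}}$ has full rank $n^2-4$; this is a Zariski-open condition whose non-emptiness one checks by producing one explicit $\vec{h}$ for which the adjoint orbit of $\liet'$ in the Grassmannian of $(n-2)$-planes in $\lieg$ hits $n+2$ points in general linear position. Lemma \ref{l: bull} then produces a subvariety $Z\subseteq G^{n+2}\times(T')^{n+2}$, of degree bounded in terms of $n$ and $(a_1,\dotsc,a_r)$, off which $d\phi_{\vec{h}}$ is non-singular.

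Next I would deploy escape. By Proposition \ref{p: carbo}, since $A$ generates $G(K)$ and the first projection of $Z$ to $G^{n+2}$ is a proper subvariety, there exists $\vec{h}\in A_{k_0}^{n+2}$, with $k_0$ depending only on the degree data, such that the slice $Z_{\vec{h}}\subseteq(T')^{n+2}$ is a proper subvariety of uniformly bounded degree. An iterated, coordinate-wise application of Proposition \ref{p: carbo} (slicing $Z_{\vec{h}}$ by the first $n+1$ variables, escaping in each $T'$ factor) then yields $|(A\cap T')^{n+2}\setminus Z_{\vec{h}}|\gg M^{n+2}$. Lemma \ref{l: ofor} applied to $\phi_{\vec{h}}$ gives $|\phi_{\vec{h}}((A\cap T')^{n+2}\setminus Z_{\vec{h}})|\gg M^{n+2}$; since every $h_i t_i h_i^{-1}$ lies in $A_{2k_0+1}$, the image lies in $A_k$ for $k=(n+2)(2k_0+1)$, and we conclude $M^{n+2}\ll|A_k|$, which is the desired bound.

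The main obstacle will be the generic linear independence claim: verifying that the adjoint orbit of $\liet'$ in the Grassmannian of $(n-2)$-planes in $\lieg$ has dimension sufficient for $n+2$ generic conjugates to be linearly independent. Here the assumption $\charac(K)>n$ is essential, as it ensures that $T'$ is smooth of the expected dimension and that $\Ad$ on $\lieg$ behaves as in characteristic zero (in particular, root-space decompositions are well-defined). A secondary, more routine issue is quantifying the coordinate-wise escape argument so that $Z_{\vec{h}}$ absorbs only a negligible fraction of $(A\cap T')^{n+2}$; this is exactly where the uniform degree bound $\vdeg(T')=(a_1,\dotsc,a_r)$ in the hypothesis is used.
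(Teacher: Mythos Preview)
The paper does not actually prove this proposition: it is quoted verbatim from \cite[Cor.~5.14]{helfgott3} and no argument is given here. So there is no ``paper's own proof'' to compare your attempt against; any comparison would have to be with Helfgott's original argument in \cite{helfgott3}, which this paper does not reproduce.

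That said, your sketch is in the right spirit---it mirrors the ``sticking conjugates of a torus together'' mechanism behind Proposition~\ref{p: known1}(a)---but the step you label ``secondary, more routine'' is in fact where the real work hides. To conclude $|(A\cap T')^{n+2}\setminus Z_{\vec h}|\gg M^{n+2}$ by iterated use of Proposition~\ref{p: carbo}, you must apply that proposition with the acting group $\langle A\cap T'(K)\rangle$, and the hypothesis there is that the $\langle A\cap T'(K)\rangle$-orbit is \emph{not} contained in the relevant slice of $Z_{\vec h}$. Nothing you have set up guarantees this: $A\cap T'(K)$ need not generate $T'(K)$, and $\langle A\cap T'(K)\rangle$ could well sit inside a proper subvariety of $T'$ of bounded degree. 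The way out (and this is how the argument in \cite{helfgott3} is structured) is to invoke Lemma~\ref{l: alin}: if $\langle A\cap T'(K)\rangle$ lies in a proper subvariety of $T'$, it lies in the kernel of a further character, hence in a subtorus $T''\subsetneq T'$ of dimension $n-3$, and one inducts on $\dim T'$. This induction is the actual content of the result, not a bookkeeping detail; your write-up suppresses it entirely.

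Your acknowledged ``main obstacle''---the generic linear independence of $\Ad(h_1)\liet',\dotsc,\Ad(h_{n+2})\liet'$ inside $\lieg$---is also genuine and not automatic. It amounts to showing that the $\Ad(G)$-orbit of $\liet'$ in the Grassmannian of $(n-2)$-planes spans enough of $\lieg$; one concrete route is to exhibit explicit $h_i$ for which the sum is direct, but this requires a calculation you have not supplied (and which depends on $\liet'$ not being an ``exceptional'' hyperplane in $\liet$, a point where the character hypothesis and $\charac(K)>n$ enter). Without both this verification and the inductive escape above, the argument is a plausible outline rather than a proof.
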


The significance of this special type of subtorus is explained by the next result.

\begin{lem}\cite[Lem.'s 4.2 and 4.14]{helfgott3}\label{l: alin}
Let $K$ be a field. Let $T/\overline{K} \subset (\GL_1)^n$ be a torus. Let $H$ be a subgroup of $T(\Kbar)$ that is contained in an algebraic subvariety $V$ of $T$ of positive codimension.

Then $H$ is contained in the kernel of a non-trivial character $\alpha:T\to GL_1$ whose exponents are bounded in terms of $n$ and $\vdeg(V)$ alone.
\end{lem}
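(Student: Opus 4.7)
The plan is to combine two classical ingredients: that a bounded-degree subvariety is cut out by bounded-degree equations, and that distinct characters of a group are linearly independent (Dedekind's lemma). No finer structure theory of algebraic subgroups of tori is needed.

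First I would produce a Laurent polynomial $f \in \Kbar[x_1^{\pm 1},\dotsc,x_n^{\pm 1}]$, of total degree $O_{n,\vdeg(V)}(1)$, that vanishes on $V$ but does not vanish identically on $T$. Such an $f$ exists because $V$ is a proper subvariety of $T$ and, by standard effective algebraic geometry, the ideal of $V$ in the ambient space has generators of degree bounded in terms of $\vdeg(V)$ and $n$; any such generator that is not already in the ideal of $T$ works.

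Next I would restrict $f$ to $T$ and group terms by character. Since $\Kbar[T]$ is the group algebra of the character lattice $X^*(T)$, collecting the monomials appearing in $f$ according to their restrictions to $T$ yields
\[
f|_T \;=\; \sum_{i=1}^{r} c_i \chi_i,
\]
where $\chi_1,\dotsc,\chi_r \in X^*(T)$ are distinct characters, $c_i \in \Kbar^{\times}$, and each $\chi_i$ admits a representation as a monomial on $(\GL_1)^n$ with exponent vector of size $O_{n,\vdeg(V)}(1)$. Because $H \subset V$ we have $\sum_i c_i \chi_i(h) = 0$ for every $h\in H$. The restrictions $\chi_i|_H \colon H \to \Kbar^{\times}$ are characters of the abstract group $H$, so by Dedekind's theorem on the linear independence of characters they cannot all be distinct; hence there exist $i \ne j$ with $\chi_i|_H = \chi_j|_H$. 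Then $\alpha := \chi_i \chi_j^{-1}$ is a non-trivial character of $T$ (non-trivial because $\chi_i \ne \chi_j$ as elements of $X^*(T)$) whose kernel contains $H$, and its exponents are bounded by the sum of those of $\chi_i$ and $\chi_j$, so $O_{n,\vdeg(V)}(1)$.

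The main obstacle, if any, is the first step: passing from a bound on $\vdeg(V)$ to a degree bound on some element of the ideal of $V$ inside the coordinate ring of $T \subset (\GL_1)^n$. This is fiddly to state precisely because $V$ sits inside a quasi-affine variety rather than $\Aff^n$, but it is a routine consequence of the standard theory relating the degree of a variety to the degrees of its defining equations. Everything after this step is essentially formal.
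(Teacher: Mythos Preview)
The paper does not supply its own proof of this lemma; it is quoted verbatim from \cite[Lem.'s 4.2 and 4.14]{helfgott3}. So there is nothing to compare your argument against in the present paper.

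Your argument is correct. The only point worth flagging is the one you already flag: the existence of a polynomial of degree $O_{n,\vdeg(V)}(1)$ vanishing on $V$ but not on $T$. This is indeed standard (e.g.\ via effective bounds on set-theoretic defining equations in terms of degree, or simply by comparing Hilbert functions of $V$ and $T$ inside the ambient space), and once you have such an $f$, the Dedekind/Artin independence-of-characters step is clean. Note also that the case $r=1$ in your decomposition $f|_T=\sum_i c_i\chi_i$ cannot occur when $H\ne\varnothing$, since a single nonzero term $c_1\chi_1$ cannot vanish at any point of $T(\Kbar)$; you might state this explicitly.

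For context, the route taken in \cite{helfgott3} is somewhat different in emphasis: one passes to the Zariski closure $\overline{H}$, which is an algebraic subgroup of $T$ and hence an intersection of kernels of characters; the degree bound on the character then comes from controlling $\vdeg(\overline{H})$ in terms of $\vdeg(V)$. Your approach short-circuits the structure theory of algebraic subgroups of tori by going straight to linear independence of characters on the abstract group $H$, which is arguably more elementary and gives the exponent bound more directly.
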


\subsection{Further results on conjugacy}

We need several more ideas concerning conjugacy classes in $G$. An element $g$ of a linear algebraic group $G$ is said to be {\bf regular} if $\dim(C_G(g))$ is equal to the rank of $G$. When $G$ is connected, and $g$ is semisimple this is equivalent to requiring that $C_G(g)$ is a torus. When $G=SL_n$, an element is regular semisimple if and only if its eigenvalues are distinct.

\begin{lem}\cite[Lem.\ 5.9]{helfgott3}\label{l: regular}
 Let $G=SL_n$ and $K$ be a field. Then there is a subvariety $W/K$ of $G$ of positive codimension and degree $\vdeg(W)=(a_1,\dots, a_d)$ such that $a_1,\dots a_d, d<n^{2n}$, and every element $g\in G(K)$ not on $W$ is regular semisimple.
\end{lem}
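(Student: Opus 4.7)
The plan is to take $W$ to be the discriminant locus in $\SL_n$ — the subvariety cut out by the vanishing of the discriminant of the characteristic polynomial. Since an element of $G = \SL_n$ is regular semisimple precisely when its $n$ eigenvalues (in $\Kbar$) are distinct, and this happens precisely when $\Disc(\chi_g) \neq 0$ where $\chi_g(t) = \det(tI - g)$, any element not on $W$ is automatically regular semisimple: having distinct eigenvalues forces diagonalizability over $\Kbar$, and the centralizer in $\GL_n$ is then the diagonal torus in an eigenbasis, which cuts out a centralizer of dimension $n - 1 = \rank(\SL_n)$ in $\SL_n$.

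To bound the degree, I would write $D(g) := \Disc(\chi_g)$ as a polynomial in the matrix entries. The coefficients of $\chi_g$ are polynomials of degree at most $n$ in the $n^2$ entries of $g$, and the discriminant of a monic polynomial of degree $n$ is a universal polynomial of degree $2(n-1)$ in its coefficients; so $D$ has degree at most $2n(n-1)$ in the entries of $g$. That $D$ does not vanish identically on $\SL_n$ is clear: any diagonal matrix of determinant $1$ with pairwise distinct entries over $\Kbar$ witnesses non-vanishing. Hence $W := \{g \in \SL_n : D(g) = 0\}$ has positive codimension.

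Finally, since $\SL_n \subset \Aff^{n^2}$ is itself the hypersurface $\det = 1$ of degree $n$, the variety $W$ is the intersection of $\SL_n$ with an additional hypersurface of degree $\le 2n(n-1)$; standard Bezout-type bounds then give that every irreducible component of $W$ has degree at most $2n^2(n-1)$, with the number of such components similarly controlled. In particular, each entry of $\vdeg(W) = (a_1,\dots, a_d)$, as well as the length $d$, is comfortably less than $n^{2n}$ for $n \ge 2$; the case $n = 1$ is trivial, since $\SL_1 = \{1\}$ is already regular semisimple (take $W = \emptyset$). I do not foresee a genuine obstacle: the argument is essentially a degree computation for a classical object, and the threshold $n^{2n}$ is very generous compared with the $O(n^3)$-type bounds one actually gets.
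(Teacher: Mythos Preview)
Your proof is correct and follows the natural line. Note that the present paper does not itself prove this lemma: it is quoted from \cite[Lem.~5.9]{helfgott3}, where the argument is precisely the one you give --- take $W$ to be the vanishing locus in $\SL_n$ of the discriminant of the characteristic polynomial, observe that this is a proper subvariety (a diagonal matrix with distinct entries witnesses non-vanishing), and bound degrees via B\'ezout. One small tightening you could make: the discriminant of a monic degree-$n$ polynomial is isobaric of weight $n(n-1)$ when the coefficient of $\lambda^{n-k}$ is given weight $k$, and that coefficient is a polynomial of degree exactly $k$ in the matrix entries; hence $D(g)$ has degree $n(n-1)$ in the entries of $g$, not $2n(n-1)$. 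This only sharpens an estimate that was already comfortably inside the target $n^{2n}$.
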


Of course, since $W$ is a subvariety of $SL_n$ of positive codimension, we may apply the results of \S\ref{s: escape}, and escape from it. Escaping from a subvariety $W$ of a torus is only slightly harder, provided that the torus is rich.

\begin{lem}\label{l: torus regular}
Let $G=SL_n$ and let $(w_1,\dots, w_d)$ be a tuple of non-negative integers. There exists a positive integer $l$ and $\epsilon_0, c>0$ such that 
\begin{enumerate}
\item for all finite fields $K$ of characteristic $>n$; 
\item for any $A$ a set of generators for $G$ satisfying $|A\cdot A \cdot A|\leq d|A|^{1+\epsilon}$ for some $d,\epsilon>0$ with $\epsilon<\epsilon_0$;
\item\label{cuckoo} for any corresponding rich torus $T$ (so $|A_k\cap T(K)|\geq (d'd^\delta)|A|^{\frac1{n+1}-\delta\epsilon}$); 
\item for any $W$ a proper subvariety of $T$ of degree $(w_1,\dots, w_d)$;
\end{enumerate}
we have
$$|A_{l}\cap (T(K)\backslash W(K))|\geq c|A|^{\frac1{n+1}-\delta\epsilon}.$$
\end{lem}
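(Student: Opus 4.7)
The plan is to set $C := A_k \cap T(K)$, which by hypothesis (c) satisfies $|C| \geq (d'd^\delta)|A|^{1/(n+1)-\delta\epsilon}$, and to run a dichotomy on whether the subgroup $\langle C\rangle \leq T(\Kbar)$ is contained in $W$. View $T\subseteq \GL_n$ as acting linearly on the $n\times n$ matrix space $\Aff^{n^2}$ by left multiplication, so that $W$ is a subvariety of $\Aff^{n^2}$ of degree bounded in terms of $(w_1,\ldots,w_d)$.

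In the \emph{favorable case} $\langle C\rangle \not\subseteq W(\Kbar)$, I would apply Prop.~\ref{p: carbo} with group $T$, subset $C$, subvariety $W$, and the $\langle C\rangle$-orbit of the identity matrix $I$, which equals $\langle C\rangle$ itself. This furnishes $\eta>0$ and a positive integer $m$, both depending only on $\vdeg(W)$, such that at least $\eta|C|$ elements $g\in C_m$ satisfy $g\cdot I = g \notin W(\Kbar)$. Since $C_m\subseteq A_{km}\cap T(K)$, this proves the conclusion with $l := km$ and $c := \eta(d'd^\delta)$.

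In the \emph{unfavorable case} $\langle C\rangle \subseteq W(\Kbar)$, Lemma~\ref{l: alin} applied to the subgroup $\langle C\rangle \leq T(\Kbar)$ sitting inside the proper subvariety $W$ produces a non-trivial character $\alpha : T\to \GL_1$ whose exponents are bounded in terms of $n$ and $\vdeg(W)$, with $\langle C\rangle \subseteq \ker(\alpha) =: T'$. In particular $C\subseteq A_k \cap T'(K)$, so Prop.~\ref{p: subtorus} applied to $A_k$ (which still generates $G(K)$) yields $|C| \leq C_0|A_{kk'}|^{1/(n+2)}$ for some $k'$ depending on $n$ and $\vdeg(W)$. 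Combining with the Pl\"unnecke--Ruzsa bound $|A_{kk'}|\ll |A|^{1+O(\epsilon)}$ (guaranteed by the tripling hypothesis together with Lem.~\ref{l: tripling}) gives $|C|\ll |A|^{(1+O(\epsilon))/(n+2)}$, which contradicts the lower bound on $|C|$ once $\epsilon<\epsilon_0$ is chosen small enough, using $1/(n+1)>1/(n+2)$ (the conclusion of the lemma being trivial for $|A|$ below any prescribed threshold). Hence the unfavorable case cannot occur, and the favorable case yields the result.

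The chief conceptual step --- and the only one I expect to require any thought --- is recognizing that Prop.~\ref{p: carbo} applies to the torus $T$ itself, acting on the ambient matrix space $\Aff^{n^2}$; this turns the escape-within-$T$ question into a direct application of escape-from-subvariety. The rest of the argument is a formal dichotomy and a comparison of exponents.
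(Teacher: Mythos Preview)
Your proof is correct and follows essentially the same route as the paper: a dichotomy on whether $\langle A_k\cap T(K)\rangle$ lies in $W$, with Prop.~\ref{p: carbo} handling the favorable case and Lem.~\ref{l: alin} plus Prop.~\ref{p: subtorus} ruling out the unfavorable one via the exponent gap $\tfrac{1}{n+1}>\tfrac{1}{n+2}$. Your write-up is in fact slightly more careful than the paper's about the indices (using $C=A_k\cap T(K)$ and setting $l=km$) and about making explicit the linear $T$-action on $\Aff^{n^2}$ needed to invoke Prop.~\ref{p: carbo}.
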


Note that the value of $\delta$ given in the conclusion to Lem.\ \ref{l: torus regular} is fixed by Prop. \ref{p: known1} (as per the inequality written above at (\ref{cuckoo})).

\begin{proof}
Suppose first that $\langle A_l\cap T(K)\rangle$ is not contained in $W(K)$. Then apply Prop. \ref{p: carbo} with $A_l\cap T(K)$ instead of $A$, and $W$ instead of $V$, and the result follows.

Now suppose that $\langle A_l \cap T(K)\rangle \subseteq W(K)$. Then, by Lem.\ \ref{l: alin}, $\langle A_l \cap T(K)\rangle$ is contained in the kernel of a non-trivial character $\alpha:T\to \Aff^1$ whose exponents are bounded in terms of $n$ and $\vdeg(W)$ alone. Hence, by Prop. \ref{p: subtorus} with $A_l$ instead of $A$, and $l'$ instead of $k$,
$$|A_l\cap T(K)|\ll_{n, \vdeg(W)} |A_{ll'}|^{\frac1{n+2}} \ll |A|^{\frac1{n+2}+O_n(\epsilon)}.$$
This contradicts the assumption that $T$ is a rich torus.
\end{proof}

In what follows, we fix $\epsilon>0$, and fix $k_\dagger=\max\{k, k' ,l\}$ where $k$ and $k'$ are as given in Prop. \ref{p: known2}, and $l$ is as given in Lemma \ref{l: torus regular}; for the latter result we take $W$ to be the variety defined in Lemma \ref{l: regular}. Thus, for $A\subset G(K)$ a generating set that doesn't grow, and for $T$ a rich torus, we have tight bounds on the intersection of $A_{k_\dagger}$ with the set of regular elements in $T(K)$, as well as on the number of regular conjugacy classes in $A_{k_\dagger}$.

Our analysis of conjugacy classes in $G$ will require one more definition: Given a matrix $g$ in $\SL_n(K)$, we define
$\kappa(g)\in \mathbb{A}^{n-1}(K)$ to be the tuple
\[(a_{n-1},a_{n-2},\dotsc,a_1)\]
of coefficients of
\[\lambda^n + a_{n-1} \lambda^{n-1} + a_{n-2} \lambda^{n-2} + \dotsc +
a_1 \lambda + (-1)^{n} = \det(\lambda I - g) \in K\lbrack \lambda\rbrack\]
(the characteristic polynomial of $g$).

As is well-known, $\kappa(g) = \kappa(h g h^{-1})$ for any $h$, i.e.,
$\kappa(g)$ is invariant under conjugation. If $g$ is a regular semisimple
element of $\SL_n$ then $\kappa(g)$ actually determines the conjugacy class
$\cl(g)$ of $g$. 

\section{A new bound on sets that do not grow}

Just as in Helfgott's work on $SL_3$, the trace is going to be important for us, although we will use it in a very different way.

\subsection{Independence of traces}

In this section we show that a certain set of linear forms is linearly dependent, and that any subset is linearly independent. 

Let $m\in K[t_1,\dots, t_n]$ be a monomial; we call $m$ {\it square-free} if $t_j^2$ does not divide $m$ for any $j=1,\dots, n$.

\begin{lem}\footnote{We thank an anonymous referee for significantly simplifying the proof of this result.}\label{l: vander}
Let $s_j$, $j=1,\dots, n$, be distinct elements of $K^*$. Then, for any $i$, $0\leq i\leq n$,
$$A_i=\left(\begin{matrix} 
1 & s_1 & \cdots & s_1^{i-1} & s_1^{i+1} & \cdots & s_1^{n} \\
1 & s_2 & \cdots & s_2^{i-1} & s_2^{i+1} & \cdots & s_2^{n} \\
\vdots & \vdots &  & \vdots & \vdots & & \vdots \\
1 & s_n & \cdots & s_n^{i-1} & s_n^{i+1} & \cdots & s_n^{n} \\
\end{matrix}\right)$$
has determinant equal to
$$\left(\prod_{1\leq j<k\leq n}(s_k-s_j)\right)\times S_{n-i},$$
where $S_{n-i}$ is the sum of all square-free monomials of degree $n-i$ in the variables $s_1,\dots, s_n$.
\end{lem}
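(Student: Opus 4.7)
The plan is to realise $\det(A_i)$ as a coefficient in an auxiliary Vandermonde determinant in one extra variable $x$, and then read off the answer by comparing two expressions for this determinant.

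Concretely, consider the $(n+1)\times(n+1)$ matrix
$$V(x)=\begin{pmatrix} 1 & s_1 & s_1^2 & \cdots & s_1^n \\ 1 & s_2 & s_2^2 & \cdots & s_2^n \\ \vdots & \vdots & \vdots & & \vdots \\ 1 & s_n & s_n^2 & \cdots & s_n^n \\ 1 & x & x^2 & \cdots & x^n \end{pmatrix}.$$
This is a Vandermonde matrix in $s_1,\dots,s_n,x$, so
$$\det V(x)=\left(\prod_{1\leq j<k\leq n}(s_k-s_j)\right)\prod_{j=1}^n(x-s_j).$$
On the other hand, expanding $\det V(x)$ along the last row, each power $x^i$ ($0\le i\le n$) picks up the cofactor obtained by deleting the last row and the $(i+1)$-th column of $V(x)$. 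Deleting that column leaves precisely the matrix $A_i$ of the lemma, so
$$\det V(x)=\sum_{i=0}^{n}(-1)^{n+i}x^i\det(A_i).$$

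The second step is to expand the product $\prod_{j=1}^n(x-s_j)$ in terms of elementary symmetric polynomials: one has $\prod_{j=1}^n(x-s_j)=\sum_{i=0}^n(-1)^{n-i}e_{n-i}(s_1,\dots,s_n)\,x^i$, where $e_r$ denotes the $r$-th elementary symmetric polynomial. Comparing coefficients of $x^i$ in the two expressions for $\det V(x)$, and using $(-1)^{n+i}=(-1)^{n-i}$, yields
$$\det(A_i)=\left(\prod_{1\leq j<k\leq n}(s_k-s_j)\right)\cdot e_{n-i}(s_1,\dots,s_n).$$

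Finally, $e_{n-i}(s_1,\dots,s_n)$ is by definition the sum of all products of $n-i$ distinct variables among $s_1,\dots,s_n$, which is exactly the sum $S_{n-i}$ of all square-free monomials of degree $n-i$ in $s_1,\dots,s_n$. This gives the claimed identity. The only delicate point is bookkeeping the signs in the cofactor expansion and in the expansion of $\prod_j(x-s_j)$; once these are tracked carefully the result is immediate, so I do not anticipate any serious obstacle.
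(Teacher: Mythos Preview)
Your proof is correct and is essentially the same as the paper's: both introduce the $(n+1)\times(n+1)$ Vandermonde matrix in $s_1,\dots,s_n$ and one extra variable, identify $\det(A_i)$ (up to sign) as the coefficient of the $i$-th power of that variable via cofactor expansion along the last row, and then read off the answer from the factorisation $\prod_{j<k}(s_k-s_j)\prod_k(x-s_k)$ together with the expansion of $\prod_k(x-s_k)$ in elementary symmetric polynomials. The only cosmetic differences are notational (you write $e_{n-i}$ where the paper writes $S_{n-i}$ directly, and you use $x$ rather than $t$).
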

\begin{proof}
We prove the result by calculating a Vandermonde determinant in $\{s_1,\dots, s_n, t\}$ for some variable $t$. We define
$$f= \det\left(\begin{matrix} 
1 & s_1 & \cdots & s_1^{n} \\
1 & s_2 & \cdots & s_2^{n} \\
\vdots & \vdots & & \vdots \\
1 & s_n & \cdots & s_n^{n} \\
1 & t & \cdots & t^n \\
\end{matrix}  \right)$$
and consider $f$ as a polynomial of degree $n$ in $t$. Observe that $\det A_i = (-1)^{n+i} c_i$ where $c_i$ is the coefficient of $t^i$ in $f$.

The classical formula for a Vandermonde determinant implies that
$$f=\prod\limits_{1\leq j<k\leq n} (s_k-s_j) \prod\limits_{1\leq k\leq n}(t-s_k).$$
Then
$f= \left(\prod\limits_{1\leq j<k\leq n} (s_k-s_j)\right)g$ where $g = \prod\limits_{1\leq k\leq n}(t-s_k)$ is a polynomial of degree $n$ in $t$. Write $d_i$ for the coefficient of $t^i$ in $g$; to prove the lemma it is sufficient to observe that $d_i=(-1)^{n-i}S_{n-i}$.
\end{proof}

\begin{lem}\label{l: lindep}\footnote{An anonymous referee pointed out that this result follows directly from the Cayley-Hamilton identity; c.f. the proof of Lemma \ref{l: f one}.}
Suppose that $t$ is a regular element of $T(K)$. For $i=0,\dots, n$, define linear forms,
$$l_i: G(K)\to K: g \mapsto \tr(t^i g).$$
These forms are linearly dependent. Furthermore, there exists a subvariety $W$ in $T$ of positive codimension and degree $\ll_n 1$, such that if $t\in T(K)\backslash W(K)$, then any subset of $n$ of these linear forms is linearly independent. 
\end{lem}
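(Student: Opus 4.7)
The plan is to translate linear (in)dependence of the forms $l_i$ on $G(K)$ into linear (in)dependence of the matrices $I, t, t^2, \dotsc, t^n$ inside $M_n$, and then apply the Cayley--Hamilton theorem. The identity
$$\sum_{i=0}^n c_i\, l_i(g) \;=\; \tr\Bigl(\Bigl(\sum_{i=0}^n c_i\, t^i\Bigr)\,g\Bigr),$$
valid for all $g\in G(K)$, together with the fact that $\SL_n(K)$ spans $M_n(K)$ as a $K$-vector space (off-diagonal matrix units arise from $(I+e_{ij})-I$, and diagonal matrix units from combinations of diagonal $\SL_n$-elements with at most two nontrivial entries---routine provided $|K|$ is large enough, which is certainly the case of interest), shows that $\sum c_i l_i\equiv 0$ on $G(K)$ is equivalent to the matrix identity $\sum c_i t^i=0$. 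Part (a) is then immediate from Cayley--Hamilton, which yields the monic degree-$n$ relation $t^n + a_{n-1} t^{n-1} + \dotsb + a_1 t + (-1)^n I = 0$ with $a_i$ as in the definition of $\kappa(t)$.

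For part (b) the key observation is that, since $t$ is regular, its $n$ eigenvalues over $\Kbar$ are distinct, so the minimal polynomial of $t$ has degree $n$ and hence coincides with its characteristic polynomial. Consequently $\{I,t,\dotsc,t^{n-1}\}$ is linearly independent in $M_n(\Kbar)$, and every linear relation among $\{I,t,\dotsc,t^n\}$ is a scalar multiple of the Cayley--Hamilton relation. Dropping $l_i$ from the family corresponds to dropping $t^i$; the remaining $n$ matrices are linearly independent if and only if the coefficient of $t^i$ in the Cayley--Hamilton relation is nonzero. For $i=0$ and $i=n$ this coefficient is $\pm 1$, nonzero unconditionally; for $0<i<n$ it is the regular function $a_i$ on $T$, which is (up to sign) the elementary symmetric polynomial $e_{n-i}$ in the eigenvalues of $t$ and has degree bounded in terms of $n$ alone.

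It therefore suffices to set
$$W \;:=\; \bigcup_{i=1}^{n-1}\,\{\,t\in T : a_i(t)=0\,\}.$$
Each $a_i$ is a nonzero regular function on $T$ (no $a_i$ vanishes identically, since a generic element of $T$ has all characteristic-polynomial coefficients nonzero), so each summand is a subvariety of $T$ of positive codimension, and the bound $\vdeg(W)\ll_n 1$ follows from the explicit degrees of the $a_i$. I do not foresee a genuine obstacle; the mildly delicate step is the spanning of $M_n(K)$ by $\SL_n(K)$, which is routine in the setting of the paper, and in any case one can bypass it by viewing the $l_i$ as regular algebraic forms on $G$ and descending any $K$-linear dependence from $\Kbar$ tautologically.
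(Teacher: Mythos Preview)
Your argument is correct, and it is exactly the Cayley--Hamilton route the paper's footnote credits to a referee; the paper's own proof proceeds differently. There the torus is diagonalised so that $l_i(g)=\sum_{j=1}^n s_j^i\,g_{jj}$ with $s_1,\dotsc,s_n$ the eigenvalues of $t$; dependence of the $n+1$ forms is then immediate (they live in an $n$-dimensional space of forms on the diagonal entries), and independence of any $n$ of them is checked by computing the generalised Vandermonde determinant $\det(s_k^j)_{1\le k\le n,\;0\le j\le n,\;j\ne i}$ via Lemma~\ref{l: vander}, which factors as $\prod_{j<k}(s_k-s_j)\cdot S_{n-i}$. The paper's exceptional set $W$ is thus the union of the non-regular locus and the zero loci of the $S_{n-i}$, which are precisely your $a_i$ up to sign, so the two constructions of $W$ agree. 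Your route is cleaner and avoids the separate preparation of Lemma~\ref{l: vander}, at the price of the small side argument that $\SL_n(K)$ spans $M_n(K)$; the paper sidesteps that by working only with diagonal entries of $g$, which is why it can argue dependence in one line. One cosmetic remark: since your independence argument invokes regularity of $t$, it is tidier to include the non-regular locus in $W$ as the paper does, so that the ``furthermore'' clause stands on its own without leaning on the opening hypothesis.
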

\begin{proof}
Write $T(K)$ as the group of $n\times n$ diagonal matrices over $K$ of determinant 1. The linear forms $l_i$ are forms on $n$ variables, $g_{11}, \dots, g_{nn}$. Since there are $n+1$ of these, they must be linearly dependent.

Now consider a set, $L$, of $n$ of these linear forms. Since $t$ is regular, all eigenvalues are distinct. Thus $L$ will be linearly independent provided a matrix of the form given in Lemma \ref{l: vander} is non-zero. 

For $i,j=1,\dots, n$ with $i<j$ define $V_{i,j}$ to be the variety defined by $g_{ii}- g_{jj} = 0$, and let $V_{-1}=\bigcup\limits_{1\leq i<j\leq n} V_{i,j}$. Thus $V(K)$ contains all elements of $T(K)$ that are not regular. Next define $W_i$ to be the variety defined by the equation $S_i=0$, where $S_i$ is as in Lemma \ref{l: vander}; let $W_{-1}=\cup_{i=0}^n W_i$ and let $W=V_{-1}\cup W_{-1}$. 

Clearly $V_{i,j}, i,j=1,\dots, n$ with $i<j$, and $W_i, i=0,\dots, n$ are subvarieties of $T$ of positive codimension. Then Lemma \ref{l: vander} yields the result.
\end{proof}

\subsection{A new bound}

Now we work towards proving a new bound on generating sets that ``do not grow.'' We begin with some notation. Fix $t$ an element of a torus $T(K)$ in $G(K)$, $i$ some integer satisfying $0\leq i\leq n$. For $g$ an element of $G(K)$, define two $n$-tuples as follows:
\begin{equation*}
 \begin{aligned}
\Tr_{\widehat{i},t}(g)&=(\tr(g), \tr(tg), \dots, \widehat{\tr(t^ig)}, \dots, \tr(t^n g)); \\
\Cl_{\widehat{i},t}(g)&=(\cl(g), \cl(tg), \dots, \widehat{\cl(t^ig)}, \dots, \cl(t^n g)). 
\end{aligned}
\end{equation*}

For a set $A\subset G(K)$, define
\begin{equation*}
 \begin{aligned}
\Tr_{\widehat{i},t}(A)&=\{\Tr_{\widehat{i},t}(g) \, \mid \, g\in A\, \}; \\
\Cl_{\widehat{i},t}(A)&=\{\Cl_{\widehat{i,t}}(g) \, \mid \, g\in A\, \}; \\
\Cl_{\widehat{i},t}'(A)&=\{\Cl_{\widehat{i},t}(g) \, \mid \, g\in A, \textrm{ and } g,tg,\dots, t^ng \textrm{ are semisimple} \, \}.
 \end{aligned}
\end{equation*}

The inequality that we will prove relates to $\Cl_{\widehat{i},t}(A)$. In fact $\Cl_{\widehat{i},t}(A)$ corresponds (generically) to $\cl_{T(K)}(A)$, the set of $T(K)$-conjugacy classes of $G(K)$ that intersect $A$. We will not need this correspondence in what follows.

\begin{lem}\label{l: dosh}
Let $G=SL_n$, and let $K$ be a field. For $h_0, t_0,\dots, t_n \in G(\Kbar)$, define 
$$f_{h_0,t_0,\dots, t_n}: G\to \Aff^{n^2-1}: g\mapsto (\kappa(h_0g), \kappa(t_0g),\dots, \kappa(t_ng)).$$
Define
$$f_{\widehat{i}, h_0, t_0,\dots, t_n}: G\to \Aff^{n^2-n}: g\mapsto (\kappa(h_0g), \kappa(t_0g), \dots,\widehat{\kappa(t_i g)}, \dots \kappa(t_ng)).$$
Let $T/\Kbar$ be a maximal torus of $G$. Then there exists $h_0\in G(\Kbar)$, $t\in T(\Kbar)$, and $g_0\in G(\Kbar)$, such that the derivative of $f_{\widehat{i}, h_0, t, t^2\dots, t^n}$ at $g=g_0$ is a nonsingular linear map.
\end{lem}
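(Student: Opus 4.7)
The plan is to compute the derivative at $g_0$ explicitly, reduce non-singularity to a linear-independence statement about certain matrices in $\mathfrak{gl}_n$, and verify this via a generic-position argument.

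First I would replace the $\kappa$ coordinates by the power sums $p_k(M) = \tr(M^k)$ for $k = 1, \dots, n-1$. By Newton's identities these are related to $\kappa$ by a triangular polynomial change of variables, invertible whenever $\charac \Kbar$ does not divide $(n-1)!$, which we may assume since the lemma is a statement over $\Kbar$. The rank of the differential is therefore unchanged.

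Next, parametrizing $T_{g_0} G$ by $Yg_0$ with $Y \in \mathfrak{sl}_n$ and using cyclicity of trace together with the identity $g_0 (hg_0)^{k-1} h = (g_0 h)^k$, the derivative of $g \mapsto p_k(hg)$ at $g_0$ in direction $Yg_0$ is $k\tr((g_0 h)^k Y)$. Thus the derivative of $f_{\widehat i,h_0,t,t^2,\dots,t^n}$ at $g_0$, viewed on $\mathfrak{sl}_n$, is a family of linear forms $Y \mapsto \tr(M_\alpha Y)$, where $M_\alpha$ ranges over the matrices $(g_0 h_0)^k$ and $(g_0 t^m)^k$ (for $k = 1, \dots, n-1$ and $m$ in the surviving index set of powers of $t$). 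By the non-degeneracy of the trace pairing on $\mathfrak{gl}_n$ (with $\mathfrak{sl}_n^\perp = \Kbar\cdot I$), non-singularity of the derivative is equivalent to linear independence of $\{I\} \cup \{M_\alpha\}$ in $\mathfrak{gl}_n$.

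If $g_0 h_0$ and each $g_0 t^m$ are regular semisimple, then $\{I, (g_0 h_0), \dots, (g_0 h_0)^{n-1}\}$ is a basis of the $n$-dimensional commutative subalgebra $\Kbar[g_0 h_0] \subset \mathfrak{gl}_n$, and similarly for each $\Kbar[g_0 t^m]$. So the condition reduces to requiring that the family of Cartan subalgebras $\Kbar[g_0 h_0]$ and $\Kbar[g_0 t^m]$ be in general position, in the sense that each one added to the running sum contributes a new subspace of dimension $n - 1$ (i.e., intersects the prior sum only in $\Kbar\cdot I$). A step-by-step dimension count then gives the required linear independence.

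The hard part will be verifying this general-position condition for some $(g_0, h_0, t)$. Since the failure locus is Zariski-closed in $G \times G \times T$, it suffices to exhibit one good triple. I would take $t \in T(\Kbar)$ with algebraically independent eigenvalues, and $g_0, h_0$ generic enough that $g_0 h_0$ and all the $g_0 t^m$ are regular semisimple (an open condition easily arranged). Verifying that the associated Cartan subalgebras are in general position then amounts to showing that a specific polynomial in the matrix entries does not vanish identically; this can be confirmed by an explicit computation in small rank (the $n = 2$ case is straightforward, with e.g.\ $h_0$ upper-unitriangular and $g_0^{-1}$ lower-unitriangular) and extended to general $n$ by a perturbative argument away from the degenerate locus $g_0 \in T$.
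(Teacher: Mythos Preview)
Your reduction via Newton's identities and the trace pairing is correct and pleasant, but two things need attention.

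First, a small point: the passage from $\kappa$ to power sums requires the Jacobian $\prod_{k=1}^{n-1} k^{-1}$ to be invertible, i.e.\ $\charac K\nmid (n-1)!$. Your justification ``since the lemma is a statement over $\Kbar$'' is wrong---$\Kbar$ has the same characteristic as $K$. In the applications one has $p>n$, so this does no real damage, but the lemma as stated carries no characteristic hypothesis, and the paper's proof (working directly with the coefficients of the characteristic polynomial) avoids the issue.

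Second, and this is the real gap: you correctly isolate the hard part as the general-position statement for the Cartan subalgebras $\Kbar[g_0 h_0]$ and $\Kbar[g_0 t^m]$, but you do not prove it. The sentence about confirming it in small rank and extending ``by a perturbative argument away from the degenerate locus $g_0\in T$'' is not a proof, and that particular suggestion is unpromising: at $g_0\in T$ all the algebras $\Kbar[g_0 t^m]$ collapse to the single diagonal Cartan, and a naive perturbation does not obviously separate them enough. More to the point, the elements $g_0 t^m$ are constrained to lie in a single coset $g_0 T$ of dimension $n-1$, and moreover only at the very special points $t^0,t^1,\dotsc,t^n$ (minus one); it is genuinely not automatic that $n$ Cartans arising this way can be made to span a space of dimension $n(n-1)+1$.

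The paper settles exactly this by an explicit construction: take $g_0$ to be the cyclic permutation matrix and $t$ diagonal. Then each $(t^m g_0)^k$ is supported on the $k$th cyclic off-diagonal band, so the independence question decouples band by band; within each band one gets an $n\times n$ matrix in the quantities $r_k r_{\underline{k+1}}\cdots r_{\underline{k+l-1}}$ whose nonvanishing is precisely the generalised Vandermonde determinant of Lemma~\ref{l: vander}. A separate diagonal choice of $h_0$ (up to the $g_0$-translation) then handles the remaining diagonal band via an ordinary Vandermonde. If you push your approach to completion with a concrete $g_0$, you will reproduce essentially this computation; the Newton-identity substitution is a cosmetic simplification, but the substantive content---the Vandermonde argument tied to the choice of $g_0$ as a Coxeter-type element---is what you are missing.
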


This result should be compared with \cite[Lemma 5.5]{helfgott3}; the method of proof is very similar.

\begin{proof}
We may write the elements of $G(\overline{K})$ so that the elements of
$T(\overline{K})\subset G(\overline{K})$ become diagonal matrices.
Let
\[g_0 = \left(\begin{matrix}
0 & 1 & 0 & \dotsb &0 & 0\\
0 & 0 & 1 & \dotsb &0 & 0\\
\vdots & \vdots & \vdots & \vdots & \vdots & \vdots\\
0 & 0 & 0 & \dotsb &1 & 0\\
0 & 0 & 0 & \dotsb &0 & 1\\
(-1)^{n-1} & 0 & 0 & \dotsb & 0 & 0\end{matrix}\right).\]
Let $\vec{r} = (r_1,r_2,\dotsc,r_n)$ be a vector in $\overline{K}^n$
with $r_1 \cdot r_2 \dotsb r_n = 1$. Define
\begin{equation}\label{eq:ororh}
h_1 = \left(\begin{matrix}
r_{1} & 0 & \dotsc & 0\\
0 & r_{2} & \dotsc & 0\\
\vdots & \vdots & \vdots & \vdots\\
0 & 0 & \dotsc & r_{n}\end{matrix}\right).
\end{equation}

Let us look, then, at the derivative at $g=I$ of $g\mapsto
\kappa(h_1^j g_0 g)$ for $0\leq j\leq n$. The derivative 
at $g=I$ of the map taking $g$ to the coefficient of $\lambda^{n-1}$ in
$\det(\lambda I - h_1^j g_0 g)$ (i.e., to $(-1)$ times the trace of $h_1^j
g_0 g$)
is equal to the map taking each matrix $\gamma$ in the tangent space
$\mathfrak{g}$ to $G$ at the origin to 
\[(-1)\cdot (r_{1}^j \gamma_{2,1} + r_{2}^j \gamma_{3,2} + \dotsc + r_{k}^j \gamma_{k+1,k} +
\dotsc + (-1)^{n-1} r_{n}^j \gamma_{1,n} ),
\]
where we write $\gamma_{k,l}$ for the entries of the matrix $\gamma$.

The derivative at $g=I$
of the map taking $g$ to the coefficient of $\lambda^{n-2}$
in $\det(\lambda I - h_1^j g_0 g)$ is the map taking each $\gamma$ in
$\mathfrak{g}$ to 
\[\begin{aligned}
(-1)\cdot (r_{1}^j r_{2}^j \gamma_{3,1} &+ r_{2}^j r_{3}^j \gamma_{4,2} + \dotsc +
r_{k}^j r_{k+1}^k \gamma_{k+2,k} + \dotsc + 
r_{n-3}^j r_{n-2}^j \gamma_{n-1,n-3} \\
&+ 
r_{n-2}^j r_{n-1}^j \gamma_{n,n-2} + r_{n-1}^j\cdot (-1)^{n-1} r_n^j \gamma_{1,n-1} +
(-1)^{n-1} r_{n}^j r_1^j \gamma_{2,n}).
\end{aligned}\]
In general, for $1\leq l\leq n-1$, the derivative at $g=I$ of the map taking
$g$ to the coefficient of $\lambda^{n-l}$ in $\det(\lambda I - h_1^j g_0 g)$ is the map taking $\gamma$ to
\begin{equation}\label{eq:prese}(-1)
\sum_{k=1}^{n-k} 
(r_{k}^j \cdot r_{\underline{k+1}}^j \dotsb r_{\underline{k+l-1}}^j) \cdot
\gamma_{\underline{k+l}, k} +
(-1)^{n} \sum_{k=n-l+1}^{n} 
(r_{k}^j \cdot r_{\underline{k+1}}^j \dotsb r_{\underline{k+l-1}}^j) \cdot
\gamma_{\underline{k+l}, k},
\end{equation}
where by $\underline{a}$ we mean the only element of $\{1,2,\dotsc,n\}$
congruent to $a$ modulo $n$.

We see that the entries of $\gamma$ present in (\ref{eq:prese})
are disjoint for distinct $1\leq l\leq n-1$ (and disjoint from $\{\gamma_{1,1},
\gamma_{2,2},\dotsc,\gamma_{n,n}\}$, which would appear for $l=0$).
Now, for $l$ fixed, (\ref{eq:prese}) gives us a linear form on $n$ variables
$\gamma_{\underline{k+l},k}$ for each $\leq j\leq n$. Let us
check that, for every $1\leq l\leq n-1$, any $n$ of these linear forms are linearly independent, provided that $\vec{r}$ was chosen correctly.

This is the same as checking that the $n-1$ determinants
\begin{equation}\label{eq:odorem}
\left|(r_{k}^j\cdot r_{\underline{k+1}}^j\cdots r_{\underline{k+l-1}}^j)\right|_{1\leq k\leq n, 0\leq j\leq n , j\neq i}
\end{equation}
for $1\leq l\leq n-1$ are non-zero for some choice of
$r_1,r_2,\dotsc, r_n$ with $r_1\cdot r_2 \dotsb r_n = 1$.
(What we really want to check is that the determinant (\ref{eq:odorem})
is non-zero after all signs in 
some columns are flipped; since those flips do not affect the
absolute value of the determinant, it is just as good to check that
the determinant (\ref{eq:odorem}) itself is non-zero.)

These are generalised Vandermonde determinants of the type studied in Lemma \ref{l: vander}; that result implies that they have absolute value equal to
\[
\prod_{k_1<k_2} (r_{k_2}\cdot r_{\underline{k_2+1}}
\dotsb r_{\underline{k_2+l-1}} -
r_{k_1}\cdot r_{\underline{k_1+1}}
\dotsb r_{\underline{k_1+l-1}}) \times S_{n-i},
\]
where $S_{n-i}$ is the sum of all square-free monomials $m$ of degree $n-i$ in the variables $r_{k}\cdot r_{\underline{k+1}}
\dotsb r_{\underline{k+l-1}}$ for $k=1,\dots, n$. 

For any given $k_1, k_2\in\{1,\dots, n\}$ with $k_1\ne k_2$, and $l\in\{1,\dots, n-1\}$, there exist
$r_1, r_2,\dotsc,r_n\in \overline{K}$ with $r_1 r_2 \dotsb r_n = 1$
such that
$r_{k_1}\cdot r_{\underline{k_1+1}}
\dotsb r_{\underline{k_1+l-1}} \ne
r_{k_2}\cdot r_{\underline{k_2+1}}
\dotsb r_{\underline{k_2+l-1}}$. Thus,
$r_{k_1}\cdot r_{\underline{k_1+1}}
\dotsb r_{\underline{k_1+l-1}} =
r_{k_2}\cdot r_{\underline{k_2+1}}
\dotsb r_{\underline{k_2+l-1}}$ defines a subvariety $W_{l,k_1,k_2}$ of
positive codimension in the (irreducible) variety $V\subset \mathbb{A}^n$ of
all tuples $(r_1,r_2,\dotsb,r_n)$ with $r_1 r_2 \dotsb r_n = 1$.

In addition, for $i,j=1,\dots, n$ with $i<j$, define $V_{i,j}$, the subvariety of $V$ given by the equation $r_i=r_i$; clearly the set $V\backslash V_{i,j}(\Kbar)$ is non-empty, and so these varieties are also of positive codimension in $V$. Then 
$$W = \left(\bigcup\limits_{1\leq k_1,k_2\leq n,\; k_1 \ne k_2 \atop 1\leq l\leq n-1} W_{l,k_1,k_2}\right)\bigcup\left(\bigcup\limits_{1\leq i<j\leq n} V_{i,j}\right)$$
is a finite union of subvarieties of $V$ of positive codimension.
Take $\vec{r}$ to be any point of $V(\overline{K})$ outside
$W(\overline{K})$.
It remains to choose
$h_0$
so that the derivative of
\[g\mapsto \kappa(h_0 g)\]
at $g = I$ is a linear map of full rank on the diagonal entries
$\gamma_{1,1},\gamma_{2,2}\dotsc,\gamma_{n-1,n-1}$ of $\mathfrak{g}$.
Let
\begin{equation}\label{eq:babyl}h_0 = \left(\begin{matrix}
t_{1} & 0 & \dotsc & 0\\
0 & t_{2} & \dotsc & 0\\
\vdots & \vdots & \vdots & \vdots\\
0 & 0 & \dotsc & t_{n}\end{matrix}\right),\end{equation}
where $t_1, t_2,\dotsc,t_n \in \overline{K}$ fulfil
$t_1 t_2 \dotsb t_n = 1$.
Then the derivative at $g=I$ of the map taking $g$ to the coefficient
of $\lambda^{n-1}$ in $\det(\lambda I - h_0 g)$ (i.e., to $(-1)$ times
the trace of $h_0 g$) equals the map taking $\gamma$ to
\[(-1) \cdot (
t_1 \gamma_{1,1} + t_2 \gamma_{2,2} + \dotsb + t_n \gamma_{n,n}).\]
In general, the derivative of the map taking $g$ to the coefficient
of $\lambda^{n-k}$ ($1\leq k\leq n-1$) in $\det(\lambda I - h_0 g)$
equals the map taking $\gamma$ to 
\[(-1)^k\cdot (c_{k,1} \gamma_{1,1} + c_{k,2} \gamma_{2,2} + \dotsb 
+ c_{k,n} \gamma_{n,n}),\]
where $c_{k,i}$ is the sum of all monomials
$t_{j_1} t_{j_2} \dotsc t_{j_k}$, $1\leq j_1<j_2<\dotsb<j_k\leq n$,
such that one of the indices $j_l$ equals $i$. (For example,
$c_{2,1} = t_1 \cdot (t_2 + t_3 + \dotsb + t_n)$.) 
Thus, our task is to find
for which $t_1, t_2,\dotsc, t_n$ the determinant
\[|c_{i,j} - c_{i,n}|_{1\leq i,j\leq n-1}\]
is non-zero.
Clearly, this will happen precisely when
\[|c_{i-1,j}|_{1\leq i,j\leq n}\ne 0,\]
where we adopt the (sensible) convention that $c_{0,j} = 1$ for all $j$.

A brief computation gives us that
\[|c_{i-1,j}|_{1\leq i,j\leq n} =
(-1)^{\lfloor n/2\rfloor} \cdot |t_j^{i-1}|_{1\leq i,j\leq n}.\]
This is a Vandermonde determinant; it equals $\prod_{j_1<j_2}
(t_{j_2} - t_{j_1})$. The equation $t_{j_2} = t_{j_1}$ defines a subvariety
of positive codimension in the variety $V\subset \mathbb{A}^n$ of all
$t_1,t_2,\dotsc,t_n$ with $t_1 t_2 \dotsb t_n = 1$. Thus, we may choose
$t_1,t_2,\dotsc,t_n$ such that $t_1 t_2 \dotsb t_n = 1$ and
$\prod_{j_1 < j_2} (t_{j_2} - t_{j_1}) \ne 0$.
\end{proof}

We are now able to prove the proposition that we have been aiming for (c.f. \cite[Prop.\,5.15]{helfgott3}).

\begin{prop}\label{p: conj tuple}
Let $G = \SL_n$. There exists a positive integer $k$ and $c,e>0$ such that for any $(a_1,\dots, a_d)$, a tuple of non-negative integers, there exists a positive integer $k'$ and $c'>0$ such that
\begin{itemize}
\item for any $K$ a finite field such that $|K|<e$;
\item for any $W/\overline{K}$, a proper subvariety of $G$, such that $\vdeg(W)=(a_1,\dots, a_d)$;
\item for any $E\subset T(K)$ where $T/\overline{K}$ is a maximal torus of $G$.
\item for any $A\subset G(K)$, a set of generators of $G(K)$;
; 
\end{itemize}
one of the following holds
\begin{enumerate}
\item\label{it:goro} there exists $h_0\in A_k$, and a subset
$E'\subset E_k$ with $|E'|\geq c|E|$ such that, for each $t\in E'$,
there are $\geq c'|A|$ distinct tuples
\[(\kappa(h_0 g), \kappa(g), \kappa(t g), \kappa(t^2 g),\dots, \widehat{\kappa(t^i g)}, \dots, \kappa(t^n g))
\in \mathbb{A}^{n^2-1}(K)\]
with $g\in A_{k'}$ satisfying $h_0g,
g, t g, t^2 g, \dots, t^n g \notin W(K)$, or
\item\label{it:gara} $E$ is contained in the kernel of a non-trivial character
$\alpha:T\to GL_1$ whose exponents are  bounded in terms of $n$ alone.
\end{enumerate}
\end{prop}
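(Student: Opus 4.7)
The strategy is to upgrade Lemma~\ref{l: dosh} from a single non-singular point to a family-wide bad-locus, and then run escape-from-subvarieties in two stages --- first to pick $h_0$, then to pick many $t$ --- before applying Corollary~\ref{c: gotrol} for each chosen~$t$. Concretely, I would consider the family
\[F:(G\times T)\times G\longrightarrow \Aff^{n^2-1},\qquad F((h_0,t),g)=\bigl(\kappa(h_0g),\kappa(g),\kappa(tg),\dots,\widehat{\kappa(t^ig)},\dots,\kappa(t^ng)\bigr).\]
Lemma~\ref{l: bull} produces a subvariety $Z\subset (G\times T)\times G$ with $\vdeg(Z)\ll_n 1$ outside of which $(DF_{(h_0,t)})|_{g_0}$ is non-singular, and Lemma~\ref{l: dosh} guarantees that $Z$ is \emph{proper}. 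Taking successive projections,
\[Z_2=\{(h_0,t)\in G\times T:\{g:((h_0,t),g)\in Z\}=G\},\qquad Z_3=\{h_0\in G:\{t:(h_0,t)\in Z_2\}=T\},\]
produces proper subvarieties of $G\times T$ and $G$ respectively, of $\vec{\deg}$ bounded in $n$.

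I would then apply Lemma~\ref{l: lemfac} to $Z_3$ inside $G$ (here is where the lower bound $|K|>e$ enters) to extract an $h_0\in A_k\setminus Z_3(K)$ with $k\ll_n 1$; the slice $\tilde W:=\{t\in T:(h_0,t)\in Z_2\}$ is then a proper subvariety of $T$. I would now dichotomise on $E$, mirroring the proof of Lemma~\ref{l: torus regular}. If $\langle E\rangle\subseteq \tilde W(\overline K)$, Lemma~\ref{l: alin} places $E\subseteq\langle E\rangle$ inside the kernel of a non-trivial character of $T$ with exponents bounded in terms of $n$, yielding conclusion~(\ref{it:gara}). Otherwise, Proposition~\ref{p: carbo} --- applied to the $\langle E\rangle$-orbit of $I$ inside the linear representation $T\hookrightarrow \GL_n\subset \Aff^{n^2}$, with bad-set the (push-forward of) $\tilde W$ --- furnishes $E'\subset E_k$ with $|E'|\geq c|E|$, each element of which lies outside $\tilde W$; equivalently, $Z_{(h_0,t)}$ is a proper subvariety of $G$ for every $t\in E'$.

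For any such $t$ I would enlarge the bad locus to
\[V'_{h_0,t}:=Z_{(h_0,t)}\,\cup\, h_0^{-1}W\,\cup\,\bigcup_{j\in\{0,1,\dots,n\}\setminus\{i\}}t^{-j}W,\]
a proper subvariety of $G$ of degree bounded in $\vdeg(W)$ and $n$. Outside $V'_{h_0,t}$ the derivative of $g\mapsto F((h_0,t),g)$ is non-singular \emph{and} each of $h_0g,\,g,\,tg,\dots,t^ng$ avoids $W$. Corollary~\ref{c: gotrol} then delivers $k'\ll_{\vdeg(W)}1$ and
\[\bigl|F\bigl((h_0,t),\;A_{k'}\cap(G(K)\setminus V'_{h_0,t}(K))\bigr)\bigr|\gg_{\vdeg(W)}|A|,\]
which is precisely conclusion~(\ref{it:goro}).

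The main obstacle is two-level bookkeeping: the proper-subvariety $Z$ and its projections $Z_2,Z_3$ must all have $\vec{\deg}$ controlled by $n$ alone, so that the $k$ of the statement (the stage used to pick $h_0$ and the stage used to extract $E'$) is independent of $\vec{\deg}(W)$; only the final Corollary~\ref{c: gotrol} step is allowed to absorb $\vec{\deg}(W)$ into $c'$ and $k'$. The conceptually cleanest but easy-to-mishandle piece is the dichotomy on $E$: one must check $\langle E\rangle\not\subseteq \tilde W(\overline K)$ before invoking Proposition~\ref{p: carbo}, and in the opposite case feed $\langle E\rangle$ to Lemma~\ref{l: alin} to produce the character of conclusion~(\ref{it:gara}).
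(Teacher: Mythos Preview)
Your proposal is correct and follows essentially the same route as the paper: define the family map on $(G\times T)\times G$, invoke Lemma~\ref{l: bull} and Lemma~\ref{l: dosh} to get a proper bad locus of degree $\ll_n 1$, project twice (the paper cites \cite[\S2.5.3]{helfgott3} for this; your $Z_2,Z_3$ are exactly the paper's $V_{T\times G}$-locus and $Z_G$), escape to pick $h_0$, then run the dichotomy on $\langle E\rangle$ versus the resulting proper subvariety of $T$ (your $\tilde W$ is the paper's $V_T$), and finally apply Corollary~\ref{c: gotrol} with the enlarged bad set. One small slip: the statement requires \emph{all} of $h_0g,\,g,\,tg,\dots,t^ng$ to avoid $W(K)$, including $t^ig$, so your union $V'_{h_0,t}$ should range over all $j\in\{0,\dots,n\}$, not $\{0,\dots,n\}\setminus\{i\}$; the paper includes the full union.
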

\begin{proof}
Let $X = G\times T$ and $Y = (\mathbb{A}^{n-1})^{n+1} = 
\mathbb{A}^{n^2-1}$. Let $f:X\times G\to Y$
be given by
\[f((h_0,t),g) = (\kappa(h g),\kappa(g), \kappa(t g), \kappa(t^2 g),\dotsc, \widehat{\kappa(t^i g)}, \dotsc,
\kappa(t^n g)).\]
Let $Z_{X\times G}$ be as in Lemma \ref{l:  bull}; then
$\vdeg(Z_{X\times G})\ll_n 1$. Thanks to Lem.\ \ref{l:  dosh}, we know
$Z_{X\times G}$ is a proper subvariety of $X\times G$. 

Let $Z_{G\times T\times G}$ be $Z_{X\times G}$ under the identification
$G\times T\times G = X\times G$; write the elements of $Z_{G\times T\times G}$
in the form $(h_0,t,g)$. By the argument in \cite[\S2.5.3]{helfgott3},
there is a proper subvariety $Z_G\subset G$ (with $\vdeg(Z_G)\ll_{\vdeg(Z_{G\times T\times G})} 1$, and so $\vdeg(Z_G)\ll_n 1$) such that, 
for all $h_0\in G(\overline{K})$ not on $Z_G$, 
the fibre $(Z_{G\times T\times G})_{h=h_0}$ is a proper subvariety of
 $T\times G$.

By escape (Lem.\ \ref{l:  lemfac};
it is here that that $|K|\gg_n 1$ is used),
there is an $h_0\in A_k$, $k\ll_n 1$, such that $h_0$ lies outside $Z_G$;
thus, by the definition of $Z_G$, the fibre
$V_{T\times G}:=(Z_{G\times T\times G})_{h=h_0}$ is a proper subvariety of
 $T\times G$. Again by \cite[\S2.5.3]{helfgott3}, there is a proper
subvariety $V_T$ with $\vdeg(V_T)\ll_{\vdeg(V_{T\times G})} 1$ (and so
$\vdeg(V_T)\ll_n 1$)
 such that, for all $t_0\in T(\overline{K})$ not on $V_T$,
the fibre $(V_{T\times G})_{t = t_0}$ is a proper subvariety of $G$.

Suppose first that $\langle E\rangle\not\subset V_T(K)$. We may then
use escape from subvarieties (Prop. \ref{p: carbo} with
$A = E$, $V = V_T(K)$ and $G = \mathscr{O} = \langle E\rangle$) to
obtain a subset $E'\subset E_k$ ($k\ll_n 1$) with
$|E'|\gg_n |E|$ and $E'\subset T(K)\setminus V_T(K)$. Now consider any
 $t_0\in E'$. 
The fibre $(V_{T\times G})_{t = t_0}$ is a proper subvariety of $G$,
and, since $W$ is a proper subvariety of $G$, we conclude that
\[V' = (V_{T\times G})_{t = t_0} \cup h_0^{-1} W \cup W \cup t_0^{-1} W
\cup \dotsc \cup t_0^{-n} W.\]
is a proper subvariety of $G$ as well (with $\vdeg(V')\ll_{n,\vdeg(W)} 1$).
We now recall the definition of $Z_{X\times G}$ (a variety outside
which the map $f$ is non-singular) and
 use the result on non-singularity
(Cor. \ref{c:   gotrol} applied to the function 
$f_{h_0,t_0}:G\to Y$ given by
$f_{h_0,t_0}(g)=f((h_0,t_0),g)$) to obtain that
\[|f_{h_0,t_0}(A_{k'} \cap (G(K)\setminus V'(K)))| \gg_{n,\vdeg(W)} |A|\]
with $k'\ll_{n,\vdeg(W)} 1$. This gives us conclusion (\ref{it:goro}).

Suppose now that $\langle E\rangle\subset V_T(K)$. By Lemma \ref{l: alin}, we obtain that $E$ is contained in the kernel
of a non-trivial character $\alpha:T\to \mathbb{A}^1$ whose exponents are
$\ll_n 1$.
\end{proof}

Given our generating set $A$ satisfying $|A\cdot A\cdot A|\ll_n |A|^{1+\epsilon}$, recall that Prop. \ref{p: known1} implies that there exists a torus $T$ that is {\it rich}. In other words
$$|A_k\cap T(K)|\gg |A|^{\frac1{n+1}-O_n(\epsilon)}$$
for some $k\ll_n 1$. In fact, by Lem. \ref{l: torus regular} we know that when $T$ is rich, the set of regular elements in $A_{k_\dagger}\cap T(K)$ has size $\gg |A|^{\frac1{n+1}-O_n(\epsilon)}$, where $k_\dagger \ll_n 1$. These facts allow us to prove the following corollary.

\begin{cor}\label{c: dot}
Let $\epsilon>0$ and suppose that $\epsilon$ is smaller than some constant $\epsilon_0$ depending only on $n$. Let $A$ be a generating set such that $|A\cdot A\cdot A|\ll_n |A|^{1+\epsilon},$ and let $T$ be a rich maximal torus. Then there are integers $k,l\ll_n k_\dagger$ and a set $E'$ in $A_k \cap T(K)$ such that $|E'|\gg |A|^{\frac1{n+1}+O_n(\epsilon)}$ and
$$|A|^{\frac{n}{n+1}-O_n(\epsilon)}\ll_n |\Cl_{\widehat{i}, t}'(A_l)|,$$
for all $t\in E'$. Moreover there are $\gg |A|$ elements of $A_l$ such that $g,tg, \dots, t^ng$ are semisimple.
 \end{cor}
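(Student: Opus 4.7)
The plan is to apply Proposition \ref{p: conj tuple} with a suitable choice of auxiliary data and then to extract the advertised bound by a counting argument. Let $W_0$ be the proper subvariety of $G$ supplied by Lemma \ref{l: regular}, so that $\vdeg(W_0)\ll_n 1$ and every element of $G(K)\setminus W_0(K)$ is regular semisimple. Because $T$ is rich, Lemma \ref{l: torus regular} produces an integer $l_0\ll_n 1$ and a set
$$E := A_{l_0}\cap\bigl(T(K)\setminus W_0(K)\bigr)\subset T(K)$$
of size $|E|\gg |A|^{1/(n+1)-\delta\epsilon}$, all of whose elements are regular in $T(K)$.

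Next I would feed $A$, this $E$, and $W_0$ into Proposition \ref{p: conj tuple}, obtaining integers $k, k'\ll_n 1$. Alternative (\ref{it:gara}) is ruled out by richness: it would force $E$ into the kernel of a nontrivial character of $T$ whose exponents are bounded in terms of $n$, hence into a proper subtorus $T'$ with $\vdeg(T')\ll_n 1$; Proposition \ref{p: subtorus} applied to $E\subset A_{l_0}\cap T'(K)$, combined with the Tripling Lemma, would then force
$$|E|\ll_n |A|^{1/(n+2) + O_n(\epsilon)},$$
contradicting the lower bound on $|E|$ once $\epsilon < \epsilon_0(n)$, provided $|A|$ exceeds a constant depending only on $n$ (the remaining regime $|A|\ll_n 1$ making the corollary vacuous). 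Therefore alternative (\ref{it:goro}) must hold, furnishing $h_0\in A_k$ and $E'\subset E_k\subset A_{kl_0}\cap T(K)$ with $|E'|\gg |E|\gg |A|^{1/(n+1)-O_n(\epsilon)}$ such that, for every $t\in E'$, at least $\gg |A|$ distinct tuples
$$\bigl(\kappa(h_0 g),\kappa(g),\kappa(tg),\dotsc,\widehat{\kappa(t^i g)},\dotsc,\kappa(t^n g)\bigr)$$
are realised by $g\in A_{k'}$ with $h_0 g, g, tg, \dotsc, t^n g\notin W_0(K)$. In particular each such matrix is regular semisimple, so $\kappa$ determines $\cl$ on every coordinate.

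The final step is a counting argument on the first coordinate. The element $h_0 g$ lies in $A_{k+k'}$ and is regular semisimple, so the number of distinct values of $\kappa(h_0 g)$ is at most the number of regular semisimple conjugacy classes meeting $A_{k+k'}$, which Proposition \ref{p: known2}(b) and the Tripling Lemma bound by $\ll_n |A|^{1/(n+1)+O_n(\epsilon)}$. Since there are $\gg |A|$ distinct full tuples, the projection that discards the first coordinate yields at least
$$|A|\big/|A|^{1/(n+1)+O_n(\epsilon)} \;=\; |A|^{n/(n+1)-O_n(\epsilon)}$$
distinct $(n^2-n)$-tuples; on regular semisimple entries these agree with $\Cl_{\widehat{i},t}(g)$, and the corresponding $g$'s manifestly satisfy the semisimplicity hypotheses in the definition of $\Cl_{\widehat{i},t}'(\cdot)$. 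Setting $l := nkl_0 + k'\ll_n 1$ so that all of $g, tg,\dotsc, t^n g$ lie in $A_l$ delivers the required lower bound on $|\Cl_{\widehat{i},t}'(A_l)|$, and the \emph{moreover} clause follows at once from the $\gg |A|$ distinct $g\in A_{k'}\subset A_l$ produced. The only real obstacle beyond routine bookkeeping of the constants is verifying that case (\ref{it:gara}) of Proposition \ref{p: conj tuple} is genuinely precluded by the richness of $T$; this is exactly where Proposition \ref{p: subtorus} is essential.
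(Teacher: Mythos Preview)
Your proposal is correct and follows essentially the same route as the paper: apply Proposition~\ref{p: conj tuple} with $W$ the non--regular--semisimple locus from Lemma~\ref{l: regular}, eliminate alternative~(\ref{it:gara}) via Proposition~\ref{p: subtorus} and the richness of $T$, and then in alternative~(\ref{it:goro}) bound the number of values of the first coordinate $\kappa(h_0 g)$ by Proposition~\ref{p: known2}(b) and divide. The only cosmetic difference is that you pre-filter $E$ to lie outside $W_0$ via Lemma~\ref{l: torus regular}, whereas the paper takes $E=A_{k_{\dagger\dagger}}\cap T(K)$ directly; this has no effect on the argument.
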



\begin{proof}
Let $W$ be the variety of elements in $G$ that are not semisimple, and note that, by Lem. \ref{l: regular}, $\vdeg(W)\ll_n 1$. Set $E=A_{k_{\dagger\dagger}}\cap T(K)$ where $T$ is a rich torus and $k_{\dagger\dagger}=\max\{k_\dagger, k\}$ for $k$ the constant given in Prop. \ref{p: conj tuple}; thus Prop. \ref{p: known1} implies that $|E|\gg_n |A|^{\frac1{n+1}-O_n(\epsilon)}$. Now apply Prop. \ref{p: conj tuple} to $A$. Since we are assuming that $|A\cdot A\cdot A|\ll_n |A|^{1+\epsilon}$, we may assume that $K$ is larger than some constant depending only on $n$ (otherwise our assumption is trivially violated). Thus we lie in situation (a) or (b) of Prop. \ref{p: conj tuple}.

 Suppose first that we are in situation (b). Then Lem. \ref{l: alin} and Prop. \ref{p: subtorus} imply that $|E|\ll_n |A_l|^{\frac1{n+2}}$ where $l\ll_n 1$. For $\epsilon$ small enough with respect to $n$ this gives a contradiction.

Thus we are in situation (a) and observe first that
$$|E'|\gg_n |E| \gg_n |A|^{\frac1{n+1}-O_n(\epsilon)}$$
as required. Now Prop. \ref{p: known2} implies that that there are at most $|A_{k_{\dagger\dagger}}|^{\frac1{n+1}+O_n(\epsilon)}\ll_n|A|^{\frac1{n+1}+O_n(\epsilon)}$ distinct tuples $\kappa(g)$, for $g$ a semisimple element in $A$. Indeed the same upper bound holds for tuples $\kappa (h_0 g)$, for $g\in A$ such that $h_0g$ is simple. Prop. \ref{p: conj tuple} implies, therefore, that there are $\gg_n |A|^{1-\frac1{n+1}-O(\epsilon)}$ elements in $\Cl_{\widehat{i},t}'(A)$ as required.

The final statement of the corollary follows immediately from Prop. \ref{p: conj tuple}.
\end{proof}

\section{Moving from conjugacy classes to traces}

Cor. \ref{c: dot} gives us a bound on the size of the set $\Cl_{\widehat{i}, t}'(A_l)$. In order to prove that small sets grow we will need to work with the tuple $\Tr_{\widehat{i}, t}'(A_l)$. To make the transfer from conjugacy classes to traces we introduce the concept of {\it wealth}. Fix $l_\dagger$, the value for $l$ given by Cor. \ref{c: dot}; $t$ a regular semisimple element; and $i$ an element of $\{0,\dots, n\}$. The {\bf wealth} of an element $r$ of $K$ is defined to be
$$\diamondsuit_{t,i}(r) = |\{\cl(g): \tr(t^ig)= r, g\in A_{l_\dagger} \textrm{ and } t^ig \textrm{ is semisimple} \, \}|.$$

Now fix $\epsilon$  a small positive number; for $t\in A_{k_\dagger}\cap T(K)$ we define $A_{t,(j_0, j_1,\dots, j_n)}$ where $(j_0, \dots, j_n)$ is a tuple of integers:
\begin{equation*}
\begin{aligned}
A_{t, (j_0, j_1,\dots, j_n)} = \{g\in A_{l_\dagger}\, \mid \, &2^{j_0} < \diamondsuit_{t,0}(\tr(g))\leq 2^{j_0+1}, \\
&2^{j_1}< \diamondsuit_{t,1}(\tr(tg))\leq 2^{j_1+1}, \dots, \\
&2^{j_n}< \diamondsuit_{t,n}(\tr(t^ng))\leq 2^{j_n+1}, \textrm { and } \\
&g, tg, \dots, t^ng \textrm{ are semisimple}\}. 
\end{aligned}
\end{equation*}
Thus $A_{t, (j_0, j_1,\dots, j_n)}$ is a set of elements in $A_{l_\dagger}$ such that the corresponding tuple of traces of each element represents a roughly constant number of conjugacy classes. We often write $\Aint$ for $A_{t, (j_0, \dots, j_n)}$.

Note that this definition is dependent on our generating set $A$, on the constant $\epsilon$, and on the element $t$; note, furthermore, that there are at most $(\log_2|A|)^{n+1}$ tuples $(j_0, \dots, j_n)$ such that $A_{t, (j_0, j_1,\dots, j_n)}$ is non-empty. 

\begin{lem}\label{l: wealth}
Suppose that $T$ is a rich torus and take $t\in T(K)$. Suppose that $|\Aint|\geq |A|^{1-O(\epsilon)}$. Then
$$2^{\max_ij_i - \min_i j_i} \ll d|A|^{O(\epsilon)}$$
provided that $t$ lies outside a proper subvariety $W\subset T$ of degree $\ll_n 1$.
\end{lem}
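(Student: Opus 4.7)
My plan is to prove the bound by a double-counting argument that exploits the linear dependence of the trace forms from Lemma \ref{l: lindep}, combined with the conjugacy-class bound from Prop.~\ref{p: known2}(b).

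\textbf{Setting up $W$.} Take $W \subset T$ to contain the subvariety from Lemma \ref{l: lindep} (and, for good measure, the subvariety of non-regular elements from Lemma \ref{l: regular}); by construction $\vdeg(W) \ll_n 1$. For $t \in T(K) \setminus W(K)$ the $n+1$ linear forms $l_k(g) = \tr(t^k g)$ satisfy a unique (up to scaling) non-trivial relation $\sum_{k=0}^n c_k l_k = 0$ with every $c_k \neq 0$. Consequently the trace tuple $\vec{r}(g) = (\tr(g), \tr(tg), \ldots, \tr(t^n g))$ lies on a fixed hyperplane $H \subset K^{n+1}$, and any $n$ of its coordinates determine the remaining one.

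\textbf{The main double count.} Let $i_0$ attain $j_{\max}$ and $i_1$ attain $j_{\min}$; the aim is to control $2^{j_{\max} - j_{\min}}$. By the definition of $\Aint$, for every $g \in \Aint$ and every $k$ we have $2^{j_k} < \diamondsuit_{t,k}(\tr(t^k g)) \leq 2^{j_k+1}$. Summing over $g$ gives
\begin{equation*}
S_k \; := \; \sum_{g \in \Aint} \diamondsuit_{t,k}(\tr(t^k g)) \; \asymp \; 2^{j_k} |\Aint|.
\end{equation*}
Swapping the order of summation yields
\begin{equation*}
S_k \; = \; \sum_{\mathcal{C} \in \cl(A_{l_\dagger}')} \# \bigl\{g \in \Aint \,:\, \exists\, h \in \mathcal{C} \cap A_{l_\dagger},\ t^k h \text{ semisimple},\ \tr(t^k h) = \tr(t^k g) \bigr\}.
\end{equation*}
Combining Prop.~\ref{p: known2}(b), which gives $|\cl(A_{l_\dagger + O(1)}')| \ll d|A|^{1/(n+1)+O(\epsilon)}$, with the hypothesis $|\Aint| \geq |A|^{1-O(\epsilon)}$ and the hyperplane constraint on $\vec{r}$, the goal is to show that $S_{i_0}$ and $S_{i_1}$ agree up to a factor $d|A|^{O(\epsilon)}$. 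This immediately gives the desired bound on $2^{j_{\max}-j_{\min}}$.

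\textbf{Main obstacle.} The delicate step is to convert the trace equality $\tr(t^{i_0} h) = \tr(t^{i_0} g)$ (which controls wealth at $i_0$) into a corresponding equality at $i_1$ (which controls wealth at $i_1$). The linear relation only yields $\sum_{k \neq i_0} c_k (\tr(t^k h) - \tr(t^k g)) = 0$, which is a single constraint on the $n$ differences, not the desired coordinate-wise equality. I would handle this by applying Prop.~\ref{p: carbo} (escape from subvarieties) to the auxiliary variety of ``bad'' configurations inside $H \times H$ where the $i_1$-coordinate difference fails to vanish, using $|\Aint| \geq |A|^{1-O(\epsilon)}$ to conclude that most configurations are good; should escape fail, one falls into a sub-torus situation that can be excluded via Lemma \ref{l: torus regular} and Prop.~\ref{p: subtorus}, exactly as in the proof of Cor.~\ref{c: dot}.
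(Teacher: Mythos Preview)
Your setup of $W$ is fine, and the observation $S_k \asymp 2^{j_k}|\Aint|$ is correct, but the argument breaks down exactly where you say it does, and your proposed repair does not work. The ``bad'' configurations---pairs $(g,h)$ with $\tr(t^{i_0}h)=\tr(t^{i_0}g)$ but $\tr(t^{i_1}h)\neq\tr(t^{i_1}g)$---are not a proper subvariety of the set $\{\tr(t^{i_0}h)=\tr(t^{i_0}g)\}$; on the contrary, the \emph{good} pairs (where both equalities hold) form a codimension-$1$ subset of it. Escape from subvarieties (Prop.~\ref{p: carbo}) lets you find points \emph{outside} a proper subvariety, not force points \emph{into} one, so it cannot rescue this step. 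The fallback to Lemma~\ref{l: torus regular} and Prop.~\ref{p: subtorus} is likewise inapplicable: those results concern the intersection of $A_k$ with a sub\-torus of $T$, not configurations of pairs $(g,h)$ inside $G\times G$.

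The paper's argument is structurally different and makes correct use of the linear dependence. Rather than summing wealth, it counts distinct trace tuples. First, Cor.~\ref{c: dot} gives $|\Cl'_{\widehat{i},t}(\Aint)|\gg|A|^{n/(n+1)-O(\epsilon)}$, which yields a lower bound $\gg |A|^{n/(n+1)-O(\epsilon)}/2^{\,\sum_{m\neq i} j_m}$ for $|\Tr_{\widehat{i},t}(\Aint)|$. Second, Prop.~\ref{p: known2}(b) gives the matching upper bound, and also (by dropping one more coordinate) an upper bound $\ll |A|^{(n-1)/(n+1)+O(\epsilon)}/2^{\,\sum_{m\neq i,k} j_m}$ for the number of $\Tr_{\widehat{i},\widehat{k},t}$-tuples. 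The ratio shows that some fixed $(n-1)$-tuple $\vec r$ (coordinates $m\neq i,k$) admits $\gg |A|^{1/(n+1)-O(\epsilon)}/2^{j_k}$ distinct values of $\tr(t^k g)$. Now the linear dependence is applied with \emph{all but two} coordinates pinned down: the single relation $\sum_m c_m\tr(t^m g)=0$ with $c_i\neq 0$ forces $\tr(t^i g)=c_i^{-1}\bigl(-c_k\tr(t^k g)-\sum_{m\neq i,k}c_m r_m\bigr)$, an affine bijection between the $k$th and $i$th coordinate. Hence there are equally many values of $\tr(t^i g)$; each carries wealth $\geq 2^{j_i}$, giving $\gg 2^{j_i-j_k}|A|^{1/(n+1)-O(\epsilon)}$ semisimple conjugacy classes. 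Comparing with the upper bound from Prop.~\ref{p: known2}(b) yields $2^{j_i-j_k}\ll|A|^{O(\epsilon)}$. The point you were missing is that the linear relation becomes a bijection only after $n-1$ of the coordinates have been frozen; the tuple-counting and pigeonhole are what let you freeze them.
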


Note that the subvariety $W$ will be defined so as to include all of the non-regular elements.

\begin{proof}
By Cor. \ref{c: dot}, the number of different tuples $\Tr_{\widehat{i}, t}(g)$, as $g$ varies over $\Aint$, is at least
$$\frac{\Cl_{\widehat{i}, t}'(A)}{2^{j_0+\cdots +j_{i-1}+j_{i+1}+\cdots+j_n+n}}\gg_n \frac{|A|^{\frac{n}{n+1}-O(\epsilon)}}{2^{j_0+\dots+j_{i-1}+j_{i+1}+\cdots+j_n+n}}.$$
On the other hand
$$|\{\tr(t^ig) \, \mid \, g\in\Aint, 2^{j_i} <\diamondsuit_{t,i}(\tr(t^ig))\leq 2^{j_i+1}\}|\leq \frac{|\cl(A_{i+l_\dagger}')|}{2^{j_i}}.$$
So, by Prop. \ref{p: known2} the number of different tuples $\Tr_{\widehat{i},t}(g)$ can take on is at most
$$\frac{|\cl(A_{n+l_\dagger}')|^n}{2^{j_0+\dots+j_{i-1}+j_{i+1}+\cdots+j_n}}\ll_n \frac{|A|^{\frac{n}{n+1}+O_n(\epsilon)}}{2^{j_0+\dots+j_{i-1}+j_{i+1}+\cdots+j_n}}.$$

Now we need a definition:
$$\Tr_{\widehat{i}, \widehat{k},t}(g)=(\tr(g), \tr(tg), \dots, \widehat{\tr(t^ig)}, \dots, \widehat{\tr(t^kg)}, \dots, \tr(t^n g)).$$
Again the number of different tuples $\Tr_{\widehat{i}, \widehat{k},t}(g)$ as $g$ varies over $\Aint$ is at most
$$\frac{|\cl(A_{nk+l_\dagger}')|^{n-1}}{2^{j_0+\dots+j_{i-1}+j_{i+1}+\cdots+j_{k-1}+j_{k+1}+\cdots+j_n}}\ll_n \frac{|A|^{\frac{n-1}{n+1}+O_n(\epsilon)}}{2^{j_0+\dots+j_{i-1}+j_{i+1}+\cdots+j_{k-1}+j_{k+1}+\cdots+j_n}}.$$
 
Then, for a given tuple $\overrightarrow{r}=(r_0,\dots, r_{i-1}, r_{i+1}, \dots, r_{k-1},r_{k+1}, \dots, r_{n})\in K^{n-1}$ such that $r_i=\tr(t^ig)$ for some $g\in\Aint$, and a given index $k$, there are at least
\begin{equation}\label{e: lower}
 \begin{aligned}
& \gg_n\left( \frac{|A|^{\frac{n}{n+1}-O(\epsilon)}}{2^{j_0+\dots+j_{i-1}+j_{i+1}+\cdots+j_n}}\right)\times \left(\frac{|A|^{\frac{n-1}{n+1}+O(\epsilon)}}{2^{j_0+\dots+j_{i-1}+j_{i+1}+\cdots+j_{k-1}+j_{k+1}+\cdots+j_n}}\right)^{-1}  \\
&= \frac{|A|^{\frac1{n+1}- O(\epsilon)}}{2^{j_k}} 
 \end{aligned}
\end{equation}
values for $\tr(t^kg)$ such that $\Tr_{\widehat{i},t}(g) = (r_0,\dots, r_{i-1}, r_{i+1}, \dots, r_{k-1},\tr(t^kg), r_{k+1}, \dots, r_n)$ for some $g\in A_{l_\dagger}$. Now we wish to apply Lem. \ref{l: lindep}; to do this we must assume, as we do, that $t$ lies outside an exceptional subvariety $W$ of bounded degree. Then Lem. \ref{l: lindep} implies that  there exist $a_1,\dots, a_{k-1},a_{k+1},\dots, a_n\in K$ such that, for all $g\in G(K)$,
$$\tr(t^k g) = a_1\tr(g) + \cdots a_{k-1}\tr(t^{k-1}g) + a_{k+1}\tr(t^{k+1}g)+\cdots a_n\tr(t^n g).$$
We conclude, therefore, that (\ref{e: lower}) is a lower bound on values for $\tr(t^ig)$ such that $$\Tr_{\widehat{k},t}(g) = (r_1,\dots, r_{i-1}, \tr(t^ig), r_{i+1}, \dots, r_{k-1}, r_{k+1}, \dots, r_{n+1}).$$

Observe that there are at least $2^{j_i}$ values of $\cl(t^ig)$ for a fixed value of $\tr(t^ig)$. Thus
$$|\cl(A)| \geq 2^{j_i}\frac{|A|^{\frac1{n+1}- O(\epsilon)}}{2^{j_k}}.$$
Since $|\cl(A)|\ll |A|^{\frac1{n+1}+O(\epsilon)}$ we conclude that $2^{j_i-j_k}\ll|A|^{O(\epsilon)}$ as required.
\end{proof}

\section{Small sets grow}

We continue to analyse our set $A$ that ``doesn't grow''. Let $T$ be a rich torus; by the pigeonhole principle,  every $t\in T(K)$ has a ``popular tuple'', $\overrightarrow{j}$, such that $|\Aint|\geq |A|^{1-O(\epsilon)}$. In this case Lem. \ref{l: wealth} implies that $\max_i j_i - \min_ij_i \ll O(\epsilon)\log_2|A|.$ Define an integer
$$p_{\overrightarrow{j}}=|\{t\in T(K)\, \mid \, t\not\in W(K) \textrm{ and }|\Aint| \geq |A|^{1-O(\epsilon)}\}|$$
where $W$ is as in Lem. \ref{l: wealth}. 

Lem. \ref{l: lindep} implies that, for $t\in T(K)\backslash W(K)$, the linear forms
$$l_i: G(K)\to K, \, g\mapsto \tr(t^i g)$$
are linearly dependent; what is more any subset of $n$ of these is linearly independent. These facts allow us to define
$$f:T(K)\backslash W(K)\to K^n, \, t \mapsto \overrightarrow{r}=(r_0, \dots, r_{n-1})$$
where $\overrightarrow{r}$ satisfies the equation
$$\tr(t^ng) = r_0\tr(g) + r_1\tr(t g) + \cdots + r_{n-1}\tr(t^{n-1} g)$$
for all $g\in G$. 

We need to know that $f$ is, in some sense, close to being one-to-one. The following lemma gives the result that we need.\footnote{We thank an anonymous referee for significantly simplifying the proof of this result.}

\begin{lem}\label{l: f one}
Suppose that $S$ is a subset of $T(\Kbar)\backslash W(\Kbar)$, where
$W$ is as above. Then
$$|f(S)|\geq \frac1{n!} |S|.$$
\end{lem}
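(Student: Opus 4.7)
The plan is to identify the map $f$ with (essentially) the characteristic-polynomial map on $T$ via Cayley--Hamilton, and then exploit the fact that on the regular locus of a maximal torus the characteristic-polynomial map is at most $n!$-to-one (reflecting the action of the Weyl group $S_n$ of $\SL_n$).

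First, I would apply the Cayley--Hamilton theorem to any $t \in T(\Kbar)\setminus W(\Kbar)$. Since $t\in \SL_n$, its characteristic polynomial is
\[\det(\lambda I - t) = \lambda^n + a_{n-1}\lambda^{n-1} + \dotsb + a_1\lambda + (-1)^n,\]
so Cayley--Hamilton gives $t^n = -a_{n-1}t^{n-1} - \dotsb - a_1 t - (-1)^n I$. Multiplying by any $g\in G$ on the right and taking the trace yields
\[\tr(t^n g) = (-1)^{n+1}\tr(g) + (-a_1)\tr(tg) + \dotsb + (-a_{n-1})\tr(t^{n-1}g),\]
which holds for every $g\in G$. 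Because $t$ lies outside $W$, Lem.\ \ref{l: lindep} tells us that the $n$ linear forms $l_0, l_1,\dotsc,l_{n-1}$ on $G(K)$ are linearly independent, so this expression is the unique linear relation of the required form. Hence $f(t) = ((-1)^{n+1}, -a_1, -a_2,\dotsc,-a_{n-1})$; in particular, $f(t)$ determines (and is determined by) the characteristic polynomial of $t$.

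Next I would bound the fibres of $f$. Two elements $t,t'\in T(\Kbar)$ have the same image under $f$ if and only if they have the same characteristic polynomial, i.e., the same multiset of eigenvalues. Conjugating over $\Kbar$ so that $T$ becomes the diagonal torus in $\SL_n$, we may write $t = \mathrm{diag}(s_1,\dotsc,s_n)$; every $t'\in T(\Kbar)$ with the same characteristic polynomial is then obtained by permuting the diagonal entries, giving at most $n!$ possibilities. (For regular $t$, namely $t\notin W$, the $s_i$ are pairwise distinct, so the stabilizer is trivial and the fibre has size exactly $n!$; for our purposes the bound $\leq n!$ suffices.)

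Combining the two steps: the restriction of $f$ to $T(\Kbar)\setminus W(\Kbar)$ has all fibres of cardinality at most $n!$. Therefore for any $S\subset T(\Kbar)\setminus W(\Kbar)$,
\[|f(S)| \geq \frac{|S|}{n!},\]
as required. There is no serious obstacle here once one recognizes the Cayley--Hamilton identification; the only thing to be careful about is ensuring that $W$ has been chosen to contain the non-regular locus of $T$, which was done explicitly in the construction preceding the lemma.
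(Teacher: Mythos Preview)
Your proof is correct and follows essentially the same approach as the paper: apply Cayley--Hamilton to identify $f(t)$ with (a sign-twist of) $\kappa(t)$, then observe that on the regular locus of the diagonal torus two elements share the same characteristic polynomial iff their diagonal entries differ by a permutation, giving fibres of size at most $n!$. Your explicit appeal to Lem.~\ref{l: lindep} for uniqueness of the $r_i$ is a nice touch that the paper leaves implicit in the definition of $f$.
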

\begin{proof}
Recall the definition of $\kappa(t)=(a_1, \dots, a_{n-1})$ given at the end of Section \ref{s: known}. We apply the Cayley-Hamilton theorem (which states that a square matrix $t$ satisfies the polynomial $\det(\lambda I - t)=0$) to conclude that 
\begin{equation*}
\begin{aligned}
& \det(\lambda I - t) = \lambda^n + a_{n-1}\lambda^{n-1} + \cdots + a_1\lambda + (-1)^n; \\
\Rightarrow & t^n = -a_{n-1}t^{n-1} -a_{n-2}t^{n-2} - \cdots - a_1 t - (-1)^n; \\
\Rightarrow & t^ng = -a_{n-1}t^{n-1}g -a_{n-2}t^{n-2}g - \cdots - a_1 tg - (-1)^ng \textrm{ for all } g\in G(K); \\
\Rightarrow & \tr(t^ng) = -a_{n-1}\tr(t^{n-1}g)- \cdots - a_1 \tr(tg) - (-1)^n\tr(g) \textrm{ for all } g\in G(K); \\
\Rightarrow & r_{n-1}=-a_{n-1}, r_{n-2}=-a_{n-2}, \dots, r_1=-a_1, r_0=(-1)^{n+1}. 
\end{aligned}
\end{equation*}
Now, all elements in $T(\Kbar)\backslash W(\Kbar)$ are regular; write $T(\Kbar)$ as the set of diagonal matrices in $G(\Kbar)$. Then two elements $g, h\in T(\Kbar)\backslash W(\Kbar)$ satisfy $\kappa(g)=\kappa(h)$ if and only if there is an element $\sigma$ in the symmetric group on $n$ letters such that 
$g_{ii}=h_{\sigma(i)\sigma(i)}$. The result follows.
\end{proof}

Now set $\overrightarrow{j^{\rm max}} = (j^{\max}_0, \dots, j^{\max}_n)$ be the tuple for which $p_{\overrightarrow{j}}$ is maximal (so $\overrightarrow{j^{\rm max}}$ is the ``most popular'' popular tuple). Define
$$D=\{t\in T(K)\cap A_{k_\dagger}\, \mid \, t\not\in V(K)\cup W(K) \textrm{ and }|A_{t, \overrightarrow{j^{\rm max}}}|\geq |A|^{1-O(\epsilon)}\}.$$

Observe that $|D|=p_{\overrightarrow{j^{\rm max}}}$. Since there are at most $(\log_2|A|)^n$ choices for $\overrightarrow{j^{\rm max}}$, Lem. \ref{l: torus regular} implies that $|D|\gg_n |A|^{\frac1{n+1}-O(\epsilon)}$.

We also define ${j^{\rm max}_{\max}}$ (resp. ${j^{\rm max}_{\min}}$) to be the maximum (resp. minimum) element in $\{j^{\max}_0, \dots, j^{\max}_n\}.$

We need the following result from arithmetic combinatorics; it is a strengthening of \cite[Cor. 3.8]{helfgott3}.

\begin{prop}\label{p: vital}
There exists $c>0$ such that, for any $n>2$ a positive integer and any $\delta>0$, there exist $e,\eta>0$ such that
\begin{itemize}
\item for every prime $p$, and field $K=\Z/p\Z$;
\item for every set $X\subset K$ such that $|X|\leq p^{1-\delta}$;
\item for every set $Y\subset K^n$;
\item for each $\overrightarrow{y}\in Y$, and $X_{\overrightarrow{y}}\subseteq X^n$ such that
$$\overrightarrow{y}\cdot X_{\overrightarrow{y}}=\{y_1x_1+\cdots + y_nx_n: \overrightarrow{x}\in X_{\overrightarrow{y}}\}$$
is contained in $X$; 
\end{itemize}
either $|Y|<|X|^{cn\eta}$ or
\begin{equation}\label{e: conc}
|X_{\overrightarrow{y}}|\leq e|X|^{n-\eta}\textrm{ for some } \overrightarrow{y}\in Y.
\end{equation}
\end{prop}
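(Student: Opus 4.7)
We argue by contrapositive, supposing that $|X_{\overrightarrow{y}}|>e|X|^{n-\eta}$ for every $\overrightarrow{y}\in Y$, and deducing $|Y|<|X|^{cn\eta}$. The idea is to reduce, via iterated slicing, to the corresponding statement \cite[Cor.\ 3.8]{helfgott3} at low dimension, which was established using the sum-product theorem in $\mathbb{F}_p$ (of Bourgain--Katz--Tao / Bourgain--Glibichuk--Konyagin type); the hypothesis $|X|\leq p^{1-\delta}$ is precisely what is needed to invoke that sum-product theorem with quantitative gain.

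The inductive step from $n-1$ to $n$ proceeds by a Fubini argument on the last coordinate. Given $\overrightarrow{y}=(\overrightarrow{y}', y_n)\in Y$ with $|X_{\overrightarrow{y}}|>e|X|^{n-\eta}$, a standard averaging extracts a set $S_{\overrightarrow{y}}\subseteq X$ of size $\gg |X|^{1-\eta/2}$ such that, for each $x_n\in S_{\overrightarrow{y}}$, the fibre
\begin{equation*}
F_{\overrightarrow{y},x_n}=\left\{\overrightarrow{x}'\in X^{n-1}\ :\ \overrightarrow{y}'\cdot\overrightarrow{x}'\in X-y_n x_n\right\}
\end{equation*}
has size $\gg |X|^{n-1-\eta/2}$. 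The translated target $X-y_n x_n$ has the same cardinality as $X$ and still satisfies $|X-y_n x_n|\leq p^{1-\delta}$; moreover, the hypothesis of the proposition is affine-shift-invariant. Thus $(\overrightarrow{y}', F_{\overrightarrow{y},x_n})$ verifies the inductive hypothesis at level $n-1$ (with a slightly larger $\eta$ and a slightly smaller $e$), yielding a bound on the number of $\overrightarrow{y}'$ that arise for each fixed shift $s=y_n x_n$.

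To convert this into a bound on $|Y|$ itself, I would double-count pairs $(\overrightarrow{y},x_n)\in Y\times X$ with $x_n\in S_{\overrightarrow{y}}$: on the one hand, this count is $\gg|Y|\cdot|X|^{1-\eta/2}$; on the other, it is bounded by the product of the number of pairs $(\overrightarrow{y}', s)$ arising (controlled by the induction) and the number of preimages of such a pair in $Y\times X$ (at most $|X|$, via $x_n\mapsto s/y_n$). Iterating the inductive estimate across all $n$ coordinates, so that the ``shift'' variable always contributes only $|X|^{O(\eta)}$ by the symmetry between coordinates, one concludes $|Y|\leq |X|^{cn\eta}$ for $c$ chosen sufficiently large relative to the implicit constants of \cite[Cor.\ 3.8]{helfgott3}.

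The principal obstacle is the quantitative bookkeeping: the parameters $\eta$, $e$ and $c$ must be chosen delicately, since each of the $O(n)$ slicing steps introduces a multiplicative loss of the form $|X|^{O(\eta)}$ which must collectively fit inside the final bound $|X|^{cn\eta}$. A secondary subtlety is the verification that \cite[Cor.\ 3.8]{helfgott3} remains applicable after the target $X$ is replaced by one of its translates $X-s$; this is routine because that corollary's conclusion depends only on the cardinalities and additive-multiplicative behaviour of the sets, which are invariant under the translation. A final point is the symmetric handling of coordinates, which is what prevents the a-priori range of the shift variable $s\in K$ from polluting the final estimate with a factor of $|K|=p$.
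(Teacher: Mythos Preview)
Your induction-on-$n$ scheme has a genuine gap precisely at the point you flag as ``a final point'': controlling the range of the shift $s=y_nx_n$. In your double count you bound the number of pairs $(\overrightarrow{y}',s)$ by the number of shifts $s$ times the inductive bound on $\overrightarrow{y}'$ for each fixed target $X-s$, but nothing in the hypotheses prevents the set $\{y_nx_n:\overrightarrow{y}\in Y,\ x_n\in S_{\overrightarrow{y}}\}$ from having size of order $p$: the last coordinates $y_n$ are arbitrary field elements, not elements of $X$. The phrase ``symmetric handling of coordinates'' does not resolve this; the inductive hypothesis applies only to a \emph{fixed} target set, whereas here the target $X-s$ varies with both $\overrightarrow{y}$ and $x_n$, and summing over all $s$ costs a factor you cannot afford. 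This is not a bookkeeping issue but the actual crux of the problem.

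The paper's proof avoids this obstacle in an instructive way. It does not induct on $n$: it fixes $x_3,\dots,x_n$ in a single pigeonhole step, reducing directly to a two-variable inclusion $y_1x_1+y_2x_2+c_{\overrightarrow{y}}\in X$ with a $\overrightarrow{y}$-dependent shift $c_{\overrightarrow{y}}=y_3x_3+\dots+y_nx_n$. The shift is then eliminated not by bounding its range but by passing to the additive energy $E_+(X,y_1X)$: by Cauchy--Schwarz this energy is at least $|y_2X+c_{\overrightarrow{y}}|^{-1}$ times the square of the number of pairs $(x,x')\in X\times X$ with $x'-y_1x\in y_2X+c_{\overrightarrow{y}}$, and since $|y_2X+c_{\overrightarrow{y}}|=|X|$, the constant $c_{\overrightarrow{y}}$ drops out entirely. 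The resulting lower bound $\sum_{y_1\in\pi_1(Y'')}E_+(X,y_1X)\gg |\pi_1(Y'')|\,|X|^{3-2\eta}$ is then played off against Bourgain's Theorem~C \cite{Bo} (not \cite[Cor.~3.8]{helfgott3}, which is the weaker statement being strengthened here), yielding the contradiction when $|X|\leq p^{1-\delta}$ and $|\pi_1(Y'')|$ is large. Passing to energy is exactly the device that makes the argument shift-invariant; without it, or some substitute, the inductive reduction you outline does not close.
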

\begin{proof}
Suppose $|Y|\geq |X|^{c n \eta}$ ($c>0$ to be chosen later) and 
$|X_{\vec{y}}|\gg |X|^{n-\eta}$ for every $\vec{y} \in Y$. We will
try to derive a contradiction.

There is a coordinate $1\leq i\leq n$ such that the image  $\pi_i(Y)$
of $Y$ under the projection $\pi_i$ to the $i$th coordinate has
$\geq |Y|^{1/n}$ elements. We can assume without loss of generality that
$n=1$. Let $Y'\subset Y$ be a maximal set such that $\pi_1|_{Y'}$ is 
injective. Then $|Y'|\gg |Y|^{1/n}$.

For $x_3,x_4,\dotsc,x_n\in X$, let \[R_{x_3,x_4,\dotsc,x_n} =
\{(\vec{y}, x_1, x_2) \in Y'\times X\times X:
 (x_1,x_2,\dotsc,x_n)\in X_{\vec{y}}\}.\]
By assumption, $\sum_{x_3,\dotsc,x_n\in X} |R_{x_3,\dotsc,x_n}| \gg
|Y'| |X|^{n-\eta}$, and so we can fix $x_3,x_4,\dotsc,x_n\in X$
such that $S=R_{x_3,\dotsc,x_n}$ has $\gg |Y'| |X|^{2-\eta}$ elements.
For $\vec{y} \in Y'$, let $c_{\vec{y}} = x_3 y_3 + \dotsc + x_n y_n$.

We have $S\subset Y'\times X\times X$; let $S_{\vec{y}} = \{
(x_1,x_2)\in X\times X: (\vec{y},x_1,x_2) \in S\}$. There is an $\epsilon>0$
such
that the sets $S_{\vec{y}}$ with $|S_{\vec{y}}|\leq \epsilon |X|^{2-\eta}$
contribute at most $\frac{1}{2} |S|$ elements to $S$. Let $Y''$ be the set
of all $\vec{y} \in Y'$ with $|S_{\vec{y}}|> \epsilon |X|^{2-\eta}$. Then
$\sum_{\vec{y}\in Y} |S_{\vec{y}}| \geq \frac{1}{2} |S| \gg |Y'| |X|^{2-\eta}$
and so $|Y''|\gg |Y'| |X|^{\eta} \gg |X|^{(c-1) \eta}$. By the definition
of $S_{\vec{y}}$,
\[\{x_1 y_1 + x_2 y_2 + c_{\vec{y}} : (x_1,x_2)\in S_{\vec{y}}\}\subset X.\]
Thus, for all $\vec{y} \in Y''$, there are $> \epsilon |X|^{2-\eta}$
distinct pairs $(x,x') = (x_1,x_1y_1+x_2 y_2 +c_{\vec{y}})\in X\times X$ 
such that $x'- y_1 x \in y_2 X + c_{\vec{y}}$. Thus the {\em energy}
$E_+(X,y_1 X) = \sum_{d\in X - y_1 X} |\{a\in X, b\in y_1 X: a- b = d\}|^2$
is greater than $\frac{1}{|y_2 X + c_{\vec{y}}|} \left(
\epsilon |X|^{2-\eta}\right)^2
= \epsilon^2 |X|^{3-2\eta}$. Hence
\begin{equation}\label{eq:cosou}
\sum_{y_1\in \pi(Y'')} E(X,y_1 X) > \epsilon^2 \pi_1(Y'') |X|^{3-2\eta} .
\end{equation}
Recall that $\pi_1|_{Y'}$ is injective and $Y''\subset Y'$ has $\gg
|X|^{(c-1)\eta}$ elements; thus $|\pi_1(Y'')|\gg |X|^{(c-1)\eta}$.

Hence, for $c$ and $p$
large enough and $\eta$ smaller than a constant times $\delta$,
 (\ref{eq:cosou}) is contradiction  with Theorem C of \cite{Bo}. (If $p$
is smaller than a constant, the statement we are trying to prove is trivially true.)
\end{proof}

We are now able to prove our main result.

\begin{main}
Let $G=\SL_n$ and fix $\delta$ a positive number. Then there are positive numbers $\epsilon$ and $C$ such that, for all fields $K=\Z/p\Z$ ($p$ prime), and all sets $A\subset G(K)$ that generate $G(K)$, either $|A|<p^{n+1-\delta}$, $\delta>0$ or
$|A\cdot A\cdot  A|\geq C|A|^{1+\epsilon}$.
\end{main}

\begin{proof}
Suppose that $|A|< p^{n+1-\delta}$ and that $A$ does not grow; we seek a contradiction to Prop. \ref{p: vital}. Let $\eta$ be some (small) positive number.

First set $R=K=\Z/p\Z$. We define $X$ to be the following set:
$$\{\tr(t^ia) \, \mid \, t\in D, a\in A_{t, \overrightarrow{j^{\max}}}, i=0,\dots, n; \textrm{ and } t^ia \textrm{ is semisimple } \}.$$
Observe that $|X|\leq \frac{|\cl((A_{nk_\dagger+l_\dagger})')|}{2^{j^{\max}_{\min}}}$ where $(A_{nk_\dagger+l_\dagger})'$ is the set of semisimple elements in $A_{nk_\dagger+l_\dagger}$. Prop. \ref{p: known2} implies that  $|\cl(A_{nk_\dagger+l_\dagger})'|\ll_n |A|^{\frac{1}{n+1}+O(\epsilon)}$, and so 
$$|X|\leq \frac{|A|^{\frac{1}{n+1}+O(\epsilon)}}{2^{j^{\max}_{\min}}}.$$
 Thus if we pick $\epsilon$ small enough we know that $|X|<p^{1-\delta'}$ for some positive number $\delta'$, and we have satisfied the assumption in Prop. \ref{p: vital}.

Now define $Y=f(D)$ and for $\overrightarrow{y}\in Y$ we define $X_{\overrightarrow{y}}$ to be the set of elements $\overrightarrow{x}$ in $X^n$ such that $\overrightarrow{y}\cdot \overrightarrow{x}$ is equal to the trace of a semisimple element in $DA$, i.e. such that $\overrightarrow{y}\cdot \overrightarrow{x}$ lies in $X$. 

Our first job is to show that $|X_{\overrightarrow{y}}|\gg_n |X|^{n-\eta}$ for all $\overrightarrow{y}\in Y$. Take $\overrightarrow{y}=f(t)$ for some $t\in D$. Then $\overrightarrow{y}=(r_0, \dots, r_{n-1})$
where $\overrightarrow{r}$  satisfies the equation
$$\tr(t^ng) = r_0\tr(g) + r_1\tr(t g) + \cdots + r_{n-1}\tr(t^{n-1} g)$$
for all $g\in G$.

Now Prop. \ref{p: conj tuple} implies that, for all $i=0,\dots, n$, 
$$|\Cl_{\widehat{i},t}(A_{t, \overrightarrow{j^{\max}}})|\gg_n |A_{t, \overrightarrow{j^{\max}}}|^{\frac{n}{n+1}+O(\epsilon)} \gg_n {|A|^{\frac{n}{n+1}+O(\epsilon)}}.$$
Thus we conclude that
$$|\Tr_{\widehat{i},t}(A_{t, \overrightarrow{j^{\max}}})|\gg \frac{|A|^{\frac{n}{n+1}+O(\epsilon)}}{2^{j^{\max}_0+ \cdots + j^{\max}_n+n}}.$$

Next observe that Lem. \ref{l: wealth} implies that the wealth for each entry in the tuple differs only by a factor of $O(\epsilon)$, hence we are able to conclude that 
$$|\Tr_{\widehat{i},t}(A)|\gg \frac{|A|^{\frac{n}{n+1}-O(\epsilon)}}{(2^{j^{\max}_{\min}})^n}\gg |X|^{n-O(\epsilon)} .$$
If we pick $\epsilon$ to be small enough in terms of $\eta$ then this implies that
$|\Tr_{\widehat{i},t}(A)|\gg|X|^{n-\eta},$ as required.

Finally we need to show that $|Y|\gg |X|^{O(n\eta)}$. Now Lem. \ref{l: f one} implies that 
$$|Y|=|f(D)|\gg_n |A|^{\frac1{n+1}+O_n(\epsilon)}.$$
We conclude that, provided we choose $\epsilon$ small enough with respect to $n$ and $\eta$, then $|Y|\gg_n |X|^\eta.$ Thus we have a contradiction with Prop. \ref{p: vital} as required.
\end{proof}

\section{Generalisations}

We give a brief explanation as to how ideas from the current paper may be applied more generally. It should be a relatively straightforward matter to strengthen the Main Theorem to the following statement: for every $\delta>0$ there is an $\epsilon>0$ such that if $|A|<p^{n+1-\delta}$ or if $p^{n+1+\delta}<|A|<p^{2n+2-\delta}$, then
$$|A\cdot A\cdot A|\gg_n |A|^{1+\epsilon},$$
where $\epsilon$ and the implied constant depend only on $n$.

To prove this statement, our current proof would need to be extended to consider vectors of the form
$$\left(\begin{array}{c}
         \tr(g) \\ \tr(g^{-1})
        \end{array}\right)$$
for $g\in A$; analysis of the set of all such vectors would lead to a contradiction of a result similar to Prop. \ref{p: vital}, but over the ring $K\times K$, rather than over $K$. This would be similar to the approach adopted in \cite[\S 6]{helfgott3}, but would also need a concept of {\it wealth} for it to work in the current setting.

As it is we have restricted our attention simply to the set of traces, $\{\tr(g) | g\in A\}$. We did this because things are more transparent in this setting; but more importantly we did this because there seems no obvious way of extending the ideas in this paper to cover sets of size greater than $p^{2n+2-\delta}$. 

The significance of $\tr(g)$ and $\tr(g^{-1})$ is that these are entries in the tuple $\kappa(g)$ (the first and last entry respectively). Both of these entries correspond to the fundamental $n$-dimensional representations of $G$ over $\Kbar$; when $n=3$ these are all of the fundamental representations of $G$ and hence an analysis of these two representations gives information about sets of size all the way up to $|G|^{1-\delta}$.

When $n\geq4$, however, the other entries in $\kappa$ correspond to representations of dimension strictly greater than $n$. This fact prevents us from using them as we use $\tr(g)$ and $\tr(g^{-1})$.

The methods in the present paper seem to extend easily to $\Sp_{2 n}$,
$n\geq 2$; there they show that $|A\cdot A \cdot A| \geq |A|^{1+\epsilon}$
for every $A\subset \Sp_{2n}(K)$ with $|A|<p^{2n+1-\delta}$,
$\delta>0$.
 However, there is a technical reason why the extension of
these methods to $\SO_n$
would be more problematic: both $\SL_n$ and $\Sp_{2 n}$ obey
$(\dim(\pi) + 1)\cdot \dim(T) \leq \dim(G)$, where $\pi$ is the
smallest-dimensional faithful representation of $G$ and $T$ is the maximal
torus
of $G$. In contrast, in the case of $\SO_n$, $(\dim(\pi)+ 1)\cdot \dim(T)
= (2 m +1 )\cdot m > \frac{n(n-1)}{2} = \dim(G)$
if $n = 2m$, and $(\dim(\pi)+ 1)\cdot \dim(T)
= (2 m +2)\cdot m > \frac{n(n-1)}{2} = \dim(G)$ if $n=2m+1$.

\begin{center}
* * *
\end{center}

The difficulties in extending the result to $\SO_n$ complicate the following 
approach to proving that every subset $A\subset \SL_n(K)$ grows (as usual, $K=\Fp$ here).
Let $A\subset \SL_n(\mathbb{F}_p)$, $|A|\geq p^{n-1+\delta}$, $\delta>0$ and consider the natural action of $SL_n(K)$ on $V=K^n$.
Let $H$ be a maximal parabolic subgroup of $\SL_n(\mathbb{F}_p)$ stabilizing a one-dimensional subspace $\langle\vec{v}\rangle$ of $V$, i.e.,
\[H = \{g\in \SL_n(\mathbb{F}_p) : \text{$g \vec{v} = \lambda \vec{v}$ for
some $\lambda \in K^*$}\}.\]
Then $\lbrack G:H\rbrack \ll p^{n-1}$, and so (by \cite[Lemma 7.2]{helfgott3}),
$|A^{-1} A \cap H| \gg p^{\delta}$. Then one can try to show that 
$A^{-1} A \cap H$ grows in $H$; from this, it would follow that $A$
itself grows (\cite[Lemma 7.3]{helfgott3}). The problem here is that we could
have $\langle A^{-1} A \cap H\rangle \sim \SO_{n-1}$, 
for example. This possibility could
be avoided if we could find a rich torus $T$ in $H$, or, conversely, if we
could choose $H$ so that it contains a rich torus $T$.

The reason why this would be enough is the following. Suppose $\langle A^{-1} A \cap H \rangle \sim \SO_{n-1}$,
or even $\langle A^{-1} A \cap H \rangle \sim K^* \times \SO_{n-1}$. 
The maximal torus of
$K^* \times \SO^{n-1}$ has smaller dimension than that of $\SL_n$. Hence,
since $A^{-1} A \cap T(K)$ is contained in $A^{-1} A \cap H
\subset \langle A^{-1} A \cap H\rangle \sim K^* \times \SO_{n-1}$,
 we cannot possibly have 
$|A_k \cap T(K)|
\gg |A|^{\dim(T)/\dim(G)}$ (see Prop.\ 2.3 in the present paper), i.e.,
$T$ cannot be rich. 

In general, by the same argument, 
if $T$ is rich and contained in $H$, then $A^{-1} A \cap H$
cannot be contained in any algebraic subgroup of $H$ of bounded degree
whose maximal tori are
 of smaller dimension than $T$. The only connected algebraic subgroups
$H'$ of $H$ whose maximal tori are of the same dimension as a maximal
torus of $\SL_n$ have form
$$U\rtimes((\SL_{m_1}\times\dotsc\times\SL_{m_k})T),$$
where $U$ is a unipotent group, $m_1+m_2+\dotsc + m_k=n$, and $T$ is a maximal torus.

It follows that $A^{-1}A \cap H$ is contained in one such group $H'$,
but not in any proper algebraic subgroup of $H'$ of bounded degree.
By \cite[Prop.\ 4.2]{helfgott3}, we conclude that $A^{-1} A\cap H$
is not contained in any algebraic subvariety of $H'$ of bounded degree.
Thus, we can work as if $A^{-1} A \cap H$ generated $H'$. By induction on $n$,
the theorem $|A\cdot A\cdot A|\geq |A|^{1+\epsilon}$ would then be proven for
all $A\subset \SL_n(\mathbb{F}_p)$ of arbitrary size $|A|\leq 
|\SL_n(\mathbb{F}_p)|^{1-\delta}$, $\delta>0$.

The only gap in the above argument is that one must still prove that there
exists a maximal parabolic subgroup $H$ containing a rich torus $T$. If a 
rich torus $T$ fixes some vector $\vec{v}\in K^n$, then $T$ lies in a maximal
parabolic subgroup $H_{\vec{v}}$
of the required type. The only problem remaining,
then, is to find a rich torus $T$ (i.e., a torus $T$ with $|A_k\cap
T(K)| \gg |A|^{\frac{\dim T}{\dim G} - O(\epsilon)}$) such that $T$ has at
least
one eigenvector in $K^n$ (as opposed to just in $\overline{K}^n$).

\begin{center}
* * *
\end{center}

One possible approach to this last question is to look for regular semisimple 
elements in $A^{-1} A \cap H$, where $H$ is a maximal parabolic subgroup.
If there is one such element, there will be many, and hence there will be
a rich maximal torus $T$ inside $H$. If instead there are many non-semisimple
elements, one can conjugate them by coset representatives of $H$ to produce
$\gg |A|$ non-semisimple elements of $A_k$; this, in turn, results in
$|A\cdot A\cdot A|\geq |A|^{1+\epsilon}$ by an argument sticking varieties in
different directions (as in \cite[\S 4.3]{helfgott3}). The only problem is
that $A^{-1} A \cap H$ may consist almost entirely
 of non-regular semisimple elements, as then too many of their
conjugates could be identical;
a different argument would be needed in this case.

\subsection{Final note}
Shortly after the completion of this paper, and well after the completion of
the research leading to it, Pyber and Szabo \cite{PS}, and Breuillard, Green,
and Tao \cite{BGT} independently announced that they had proven results more
general than those in the present paper. According to these announcements, their methods are based in part on those of the second author \cite{helfgott2, helfgott3}, on which the present work also rests.

{\bf Funding.}
This work was supported by
EPSRC (Engineering and Physical Sciences Research Council) grant EP/E054919/1.

{\bf Acknowledgements.} 
We would like to thank R. Hill, M. Kassabov and N. Nikolov for interesting
conversations on subgroups and tori in $\SL_n$. In addition,
K. Raghavan, A. Zalesski and S. Murray made useful comments.
 We are likewise grateful
for the hospitality of the people of Orsay (Paris Sud XI).

\end{document}